\documentclass[english,reqno,11pt]{amsart}
\usepackage{amsmath, amsthm, amsfonts}
\usepackage{hyperref} 
\usepackage{hyperref}
\hypersetup{
	colorlinks=true,
		citecolor=blue!60!black,
		linkcolor=red!60!black,
		urlcolor=green!40!black,
		filecolor=yellow!50!black,
	breaklinks=true,
	pdfpagemode=UseNone,
	bookmarksopen=false,
}
\usepackage{amsmath,amsthm,amssymb}
\usepackage{amscd,indentfirst,epsfig}
\usepackage{latexsym}
\usepackage{times}
\usepackage{enumerate}
\usepackage{mathrsfs}
\usepackage{amsopn}
\usepackage{amsmath}
\usepackage{amssymb,mathtools}
\usepackage{amsfonts,bm}
\usepackage{amsbsy,amsmath}
\usepackage{amscd}
\usepackage{xcolor}

\usepackage[T1]{fontenc}
\usepackage{amsthm}
\usepackage{microtype}

\begin{document}
\makeatletter
\def \leq {\leqslant}
\def \le {\leq}
\def \geq {\geqslant}
\def\e{\alpha}
\def \ge {\geq}
\def\tend{\rightarrow}
\def\R{\mathbb R}
\def\S{\mathbb S}
\def\Z{\mathbb Z}
\def\N{\mathbb N}
\def\C{\mathscr{C}}
\def\D{\mathscr{D}}
\def\T{\mathcal T}
\def\E{\mathscr E}
\def\s{\sigma}
\def\k{\kappa}
\def \a{\beta}
 \def\be{\beta}
\def\t{\theta}
\def\l{\ell}
\def \M {\mathcal{M}}
\def\L{\Lambda}
\def\O{\Omega}
\def\g{\gamma}
\def \ds {\displaystyle}
\def\G{\Gamma}
\def \mM {\mathfrak{m}}
\def\f{\varphi}
\def\o{\omega} 
\def \d {\mathrm{d}}
\def \lm {\bm{m}}
\def \dD {\mathcal{R}} 
\def\U{\Upsilon}
\def\z{\zeta}
\def \Q {\mathcal{Q}}
\def\over{\bm}
\def\b{\backslash}
\def\fet{f_{\ast}}
\def \get{g_{\ast}}
\def\fprim{f^{\prime}}
\def\fprimet{f^{\prime}_\ast }
\def\vet{v_{\ast}}
\def  \pa {\partial}
\def \var {\dd}
\def\vprim{v^{\prime}}
\def\vprimet{v^{\prime}_{\ast}}
\def\grad{\nabla}

\def\Log{\textrm{Log }}
\newtheorem{theo}{Theorem}[section]
\newtheorem{prop}[theo]{Proposition}
\newtheorem{cor}[theo]{Corollary}
\newtheorem{lem}[theo]{Lemma}
\newtheorem{hyp}[theo]{Assumptions}
\newtheorem{defi}[theo]{Definition}
\newtheorem{nb}[theo]{Remark}
\def \vr {\vartheta}
\def \ll {\lambda}
\def \up {\Upsilon}
\def \kk {\kappa}
\def \dd {\bm{\varepsilon}}

\def \ind {\mathbf{1}}
\numberwithin{equation}{section}

\title[Time-asymptotic rate for the Fermi-Dirac-Fokker-Planck equation]{An entropy-level method for quantitative time-asymptotic in the Fokker-Planck model for fermions -- appearance of non saturation}
\author{R. Alonso}
\address{College of Science and Engineering, Division of Sciences, HBKU, Education City, Doha, Qatar.} \email{ralonso@hbku.edu.qa}
 
\author{B.  Lods}
\address{Universit\`{a} degli
Studi di Torino \& Collegio Carlo Alberto, Department of Economics, Social Sciences, Applied Mathematics and Statistics ``ESOMAS'', Corso Unione Sovietica, 218/bis, 10134 Torino, Italy.}\email{bertrand.lods@unito.it}

\begin{abstract}
In this contribution we consider the Fokker-Planck equation for fermions, commonly known as the Fermi-Dirac-Fokker-Planck equation.  This equation is a correction of the classical Fokker-Planck model that incorporates the Pauli exclusion principle.  We present a novel approach that combines three ingredients: (1) stability of solutions, (2) quantitative convergence for a dense set of initial data via a robust entropy method, and (3) a De Giorgi level-set iteration, to establish quantitative time-asymptotic thermalisation for general initial data.  In particular, we prove quantitative convergence for arbitrarily cold initial data.
\end{abstract}

\maketitle


%
\section{Introduction}

The Fokker-Planck equation for fermions, commonly known as the Fermi-Dirac-Fokker-Planck equation (FDFP in the sequel), is a crucial model in describing the dynamics of particles obeying Fermi-Dirac statistics. This equation represents a significant refinement of the classical Fokker-Planck model by incorporating the Pauli exclusion principle, a fundamental aspect of quantum mechanics governing the behavior of fermions. The FDFP equation arises in various physical contexts, including plasma physics, condensed matter physics, and nuclear physics, where understanding the collective behavior of fermions is essential. In the spatially homogeneous setting, the FDFP equation reads
\begin{equation}\label{LFD}\begin{cases}
\partial_{t} f(t,v) &=  {\grad}_v \cdot \left(\grad_{v} f   +\,v f (1- \dd_0 f) \right),\qquad (t,v)\in (0,\infty)\times\mathbb{R}^{3}\,,\\
\\
 f(0,v)&=f^{\rm in}(v)\,, \qquad v \in \R^{3},\end{cases}
\end{equation}
with the common shorthand $f:= f(t,v)$, and quantum exclusion parameter
$$\dd_0:= \frac{(2\pi\hslash)^{3}}{m^{3}\alpha} >0$$
and where $f(t,v)$ denotes the density of fermions with velocity $v \in \R^{3}$ at time $t >0.$ Here, $\hslash$ denotes the reduced Planck constant, $m$ the particle mass, and $\alpha >0$ is the statistical weight of the particle species, i.e. the number of independent quantum states in which the particle can possess the same internal energy. We refer to \cite[Chapter 17]{chapman} as well as the pioneering works \cite{Nord,UU} for the justification of kinetic theory for Fermi-Dirac particles.\medskip

The significance of the FDFP equation \eqref{LFD} lies in its ability to capture the essential physics of fermionic systems while remaining analytically tractable. It serves as a bridge between microscopic kinetic descriptions, such as the Boltzmann-Fermi-Dirac equation \cite{borsoni, chapman, Lu, luwennberg}, and macroscopic fluid dynamics models. While more fundamental, the Boltzmann-Fermi-Dirac equation is often challenging to analyze directly. The FDFP equation offers a simplified, yet accurate, representation that allows for detailed investigation of phenomena like thermalization, transport, and stability. Moreover, the FDFP shares analogies with models in different contexts, such as a generalization of the Keller-Segel model for chemotaxis \cite{chava}, particularly in scenarios involving blow-up and high densities where finite size effects become important. For such a model, at late stages of blow-up (corresponding to large values of density),   finite size effects and stickiness have to be  taken
into account and Keller-Segel equation can be replaced with a model of the form
$$\partial_t \varrho(t,x)=D \Delta_x \varrho(t,x) -\chi\nabla_x\left(\varrho(t,x)\left(1-\frac{\varrho(t,x)}{\sigma_0}\right)\nabla_x c\right)$$
which enforces the constraint $\varrho \leq \sigma_0$ on the maximum concentration of bacteria in physical
space.  The FDFP equation corresponds to a harmonic potential $c(x)=-\frac{1}{2}|x|^2$ and parameters $D=\chi=1$, $\sigma_0=\dd_0^{-1}$.

Equation \eqref{LFD} was studied in \cite{CLR}, where the authors considered several aspects of the model, including the Cauchy problem, stability, and the rate of convergence toward equilibrium. The main quantitative result is \cite[Theorem 3.5]{CLR}, which gives an explicit exponential rate. It applies to large initial data; however, the initial datum must be controlled by a Fermi-Dirac distribution introduced below in (1.2). This makes the convergence result in \cite[Theorem 3.5]{CLR} \emph{conditional}. In particular, it applies only to strongly decaying initial data that lie away from the \emph{saturation level} $\frac{1}{\dd_0}$. This condition excludes very cold initial states for which $f^{\rm in}=\frac{1}{\dd_0}$ on large parts of the domain.

In this work, we extend \cite[Theorem 3.5]{CLR} to general initial data, even in the extreme scenario where all the mass is concentrated in the region $\{f^{\rm in} = \frac{1}{\dd_0}\}$. We present a novel approach to analyzing the FDFP equation, combining three key elements:
\begin{enumerate}
\item[(1)] the stability of solutions in $L^{1}$-topology,
\item[(2)] the quantitative convergence of a dense initial data set via a robust entropy theory,
\item[(3)] An iteration level argument reminiscent of De Giorgi oscillation lemma \cite{degiorgi} method where the combination of an \textit{a priori} pointwise bound and a ``supporting'' integral estimation on a solution's level can be used to \emph{improve} the pointwise property of such solution. We also refer to \cite{vasseur} (in particular \cite[Prop. 9]{vasseur} and related comments) for similar questions. 
\end{enumerate}
This combination allows us to demonstrate quantitative time-asymptotic thermalization for general initial data. Importantly, we can prove quantitative convergence even in cases of arbitrarily cold initial data, a regime where the quantum effects of the exclusion principle are dominant.
 
The long-term behavior of solutions to \eqref{LFD} is significantly influenced by the Fermi-Dirac distribution, defined for any $\beta > 0$ as
\begin{equation}\label{FDD}
\M_{\beta}(v):=\frac{1}{\dd_0+\beta\exp\left(\frac{1}{2}|v|^{2}\right)}<\frac{1}{\dd_0}, \qquad v \in \R^{3}\,.
\end{equation}
This distribution represents the unique class of steady-state solutions to Eq. \eqref{LFD}. The parameter $\beta > 0$ is intrinsically linked to the dynamics through the conserved mass of the initial distribution. 
In fact, the parameter $\beta>0$ is uniquely chosen by the equation dynamics through the initial distribution's mass, which is conserved in the equation flow.  Thus, we assume  
\begin{subequations}\label{IC0}
\begin{equation}\label{IC}
\int_{\R^{3}}f^{\rm in}(v)\d v=\varrho_{\rm in} \in(0,\infty) \qquad\text{and}\qquad \int_{\R^{3}}f^{\rm in}(v)v\d v=0.
\end{equation}
As a result, there is a unique $\beta_{0}=\beta(\varrho_{\rm in}) >0$ such that
\begin{equation}\label{FDIC}
\int_{\R^{3}}\M_{\beta_{0}}(v)\d v=\varrho_{\rm in}.
\end{equation}
In the sequel, we additionally assume that
\begin{equation}\label{ICA}
\int_{\R^{3}}f^{\rm in}(v)|v|^{2}\d v=E_{\rm in} >0 \qquad\text{and}\qquad \mathcal{H}(f^{\rm in}) >0\,,
\end{equation}
\end{subequations}
where for any suitable function $0 \leq g \leq \dd_0^{-1}$ we introduce the Fermi-Dirac entropy as 
\begin{equation}\label{FDEntropy}
\mathcal{H}(g)=\frac{1}{2}\int_{\R^{3}}g(v)|v|^{2}\d v + \int_{\R^{3}}\left[g(v)\log g(v)+\frac{1}{\dd_0}\big(1-\dd_0 g(v)\big)\log\big(1-\dd_0 g(v)\big)\right]\d v\,.
\end{equation}
Associated to this entropy there is a Fermi-Dirac entropy production, namely,
\begin{equation}\label{FDEntropyP}
\mathscr{D}(g) = \int_{\mathbb{R}^3}g(1-\dd_0 g)\Big| v + \nabla_{v}\log\Big(\frac{g}{1-\dd_0 g}\Big)\Big|^2 \d v\geq0\,,
\end{equation}
so that
\begin{equation}\label{FDE-EP}
\frac{\d}{\d t}\mathcal{H}(f(t)) + \mathscr{D}(f(t))=0\,.
\end{equation}
Due to the multiplicative factor $1-\dd_0 f\geq0$ in \eqref{FDEntropyP}, the quantitative long-time asymptotic analysis of equation \eqref{LFD} is more challenging than in the classical Fokker-Planck model; certainly in regimes where $f\simeq \frac{1}{\dd_0}$ occurs in large parts of the domain such as the important case of cold plasmas.  Thus, we seek to prove the appearance of a uniform \textit{saturation gap} 
\begin{equation}\label{eq:satur}
\kappa_{0}(t)=\inf_{v\in \R^{3}}\left(1-\dd_0\,f(t,v)\right) > 0, \qquad \forall\, t \geq T_{*}\,,
\end{equation}
for an explicitly computable time $T_*>0$ depending only on the initial data.  The saturation gap can be interpreted as the positive distance, in the $L^{\infty}$ metric, appearing between the lower energy levels of the Fermi gas and the actual state of the gas due to the warming produced by collisions. The relevance of an estimate of the type \eqref{eq:satur}, in particular when $\inf_{t} \kappa_0(t) >0$, for the long-time behavior of kinetic equations for Fermi-Dirac particles is a well-known feature of such equations (see \cite{liulu,borsoni} in the context of the Boltzmann-Fermi-Dirac equation and \cite{ABL,Alonso1,Alonso2} for the Landau-Fermi-Dirac equation).

In the reference \cite{CLR} the saturation gap has been proved assuming  
$$f^{\rm in}\leq \M_{\beta_*}<1$$ with $\beta_*>0$ as small as desired, which is interpreted as a relative warm condition on the Fermi gas.  In this case, such condition has been shown to propagate along the flow 
$$f(t)\leq\M_{\beta_*}, \qquad t \geq0\,,$$ proving the uniform propagation of the saturation gap.  Under such consideration, the convergence to the correct Fermi-Dirac distribution $\M_{\beta_0}$ follows by a classical entropy-entropy production argument \cite{carrillo} using the Fermi-Dirac entropy \eqref{FDEntropy}. 

In this document, we use this result in our favour using a type of density argument.  Indeed, we can conveniently approximate any initial datum with one satisfying the aforementioned control.  Then, with the knowledge that the equation \eqref{LFD} is stable in $L^{1}$, it is possible to show that general solutions to \eqref{LFD} are essentially controlled by $\M_{\beta_*}$ in $L^{1}$ up to small error.  Crucially, the size of the error is explicitly controlled in the approximation process. Finally, we use a De Giorgi $L^{1}-L^{\infty}$ type iteration level method to prove a saturation gap from which the quantification of the time-asymptotic follows.

Let us state the main results of the manuscript starting with the appearance of the saturation gap where, in all the sequel, we introduce the notations 
$$L^q_m(\R^3)=\left\{f\::\:\R^3\to \R\;,\;\|f\|_{L^{q}_m}:=\left(\int_{\R^3}\left(1+|v|\right)^{mq}\big|f(v)\big|^{q}\,\,\d v\right)^{\frac{1}{q}}  < \infty\right\}\,$$
for any $q \geq 1, m \geq0.$ For $m=0,$ we simply write $\|\cdot\|_{L^q_0}=\|\cdot\|_{L^q}.$

\begin{theo}[\textit{\textbf{Saturation gap}}]\label{theo:satgap}
Let  $p >3$, and $f^{\rm in} \in L^{1}_{p}(\R^{3})$ be such that $0\leq  f^{\rm in} \leq  {\frac{1}{\dd_{0}}}$   and let $f=f(t,v)$ be the solution to the Cauchy problem \eqref{LFD}.  Then, there exist an explicit $\delta_0=\delta_0( f^{\rm in} )$ and a time threshold $T_{\star}=T_{\star}( f^{\rm in} ) {>1}$ such that
\begin{equation*}
\kappa_{0}=\inf_{t\geq T_\star}\inf_{v\in \R^{3}}\left(1-\dd_0 f(t,v)\right) \geq \delta_0>0\,.
\end{equation*}
The dependence is explicit and only through $f^{\rm in}$ mass and energy.
\end{theo}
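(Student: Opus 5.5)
The plan follows the three ingredients announced in the introduction. We work with $g:=1-\dd_0 f$, which solves a linear-type parabolic equation with bounded drift. The aim is to show that $g$ becomes uniformly positive after a finite time. Equivalently, we show that $f$ leaves the saturation level $\frac{1}{\dd_0}$ everywhere.

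\textbf{Step 1 (approximation and stability).} Fix $\eta>0$. We construct an approximating datum $f^{\rm in}_\eta\le \M_{\beta_*}$ with $\|f^{\rm in}-f^{\rm in}_\eta\|_{L^1}\le \eta$. One choice is $f^{\rm in}_\eta=\min(f^{\rm in},\M_{\beta_*})$ with $\beta_*$ small. Since $\M_{\beta_*}\uparrow \frac{1}{\dd_0}$ as $\beta_*\to0$, the $L^1$ error is controlled by the mass and by the tail moment of order $p$. The dependence of $\beta_*$ on $\eta$ is made explicit through the measure of the set where $\M_{\beta_*}<\frac{1}{\dd_0}-\eta'$ inside a ball of radius $R$, and through the $L^1_p$ tail outside it.

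By the comparison principle of \cite{CLR}, the solution $f_\eta$ satisfies $f_\eta(t)\le \M_{\beta_*}$ for all times. The $L^1$-contraction/stability for \eqref{LFD} (an $L^1$ accretivity or Kato-type inequality, possibly with a Gronwall factor) gives $\|f(t)-f_\eta(t)\|_{L^1}\le C\eta$. Consequently, for every level $k$ with $\frac{1}{\dd_0}(1-\dd_0\M_{\beta_*}(0))<k$, the superlevel set of $f$ is small:
\[
\int (f(t)-(\tfrac{1}{\dd_0}-k))_+\,\d v\le C\eta \quad\text{whenever } \M_{\beta_*}\le \tfrac{1}{\dd_0}-k \text{ everywhere}.
\]

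\textbf{Step 2 (De Giorgi iteration).} The function $h=f$ satisfies
\[
\partial_t h=\Delta h+\nabla\cdot(v\,h(1-\dd_0h)).
\]
For a truncation $(h-\ell)_+$ with $\ell$ close to $\frac{1}{\dd_0}$, the drift term $v\,h(1-\dd_0 h)$ is bounded by $|v|\,(\frac{1}{\dd_0}-\ell)$ on the support of the truncation. We derive the energy inequality for $(h-\ell_k)_+$ on the time intervals $[t_k,T]$, with increasing levels $\ell_k\uparrow \frac{1}{\dd_0}-\delta_0$ and times $t_k\uparrow T_\star$. Combining it with Sobolev embedding gives the nonlinear recursion $U_{k+1}\le C\,b^k U_k^{1+\varepsilon}$. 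This yields $U_k\to0$ provided $U_0\le C\eta$ is small enough; smallness of $U_0$ is exactly what Step 1 supplies.

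The weight $|v|$ in the drift is the delicate point. We plan to handle the large-$v$ region separately. For large $|v|$ we use the moment bounds, which are propagated from $f^{\rm in}\in L^1_p$, together with the fact that the energy is bounded. Alternatively, we localise the iteration in balls, using smoothing ($L^1\to L^\infty$ parabolic regularisation, e.g. via the Gaussian/Mehler kernel after writing $\nabla\cdot(vf)-\dd_0\nabla\cdot(vf^2)$). The uniform-in-time requirement for all $t\ge T_\star$ follows by applying the iteration on each window $[t-1,t]$: the bound of Step 1 holds for all $t$, and this is why $T_\star>1$.

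\textbf{Main obstacle.} The key difficulty is coupling the two steps with explicit constants. The De Giorgi threshold, i.e. the smallness required of $U_0$, depends on $\delta_0$ and on the drift bound. Meanwhile, the quantity $\eta$ coming from Step 1 must be small relative to $k=\frac{1}{\dd_0}-\ell_0$, which itself is tied to $\beta_*$. We therefore have to choose $\beta_*$ (hence $\ell_0$) first, then $\eta$, in a consistent order, and check that the stability constant does not grow with time. If the $L^1$ stability turns out to be only a Gronwall bound growing in $t$, the fallback is the entropy convergence $f_\eta\to\M_{\beta_0^\eta}$ from \cite{CLR}. Since $\M_{\beta_0^\eta}$ has a fixed gap, this controls $f_\eta$ uniformly in $t$, and the perturbation only has to be tracked on windows of unit length.
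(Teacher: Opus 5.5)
Your main line has a genuine gap, and it sits exactly at the ``main obstacle'' you identify: no order of choices can decouple the starting level from the smallness when both come from the same $\M_{\beta_*}$.

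Take $\dd_0=1$. The De Giorgi threshold is not absolute; it scales with the gap $\delta=1-\ell_0$ of the starting level. One needs $\mathscr{E}_{\ell_0}\lesssim\delta^{2}$ (condition \eqref{eq:constraintE0}). An $L^1$ bound $\eta$ on $F_{\ell_0}$ only yields $\mathscr{E}_{\ell_0}\lesssim\delta\,\eta$, since $\|F_{\ell_0}\|_{L^2}^2\le\delta\|F_{\ell_0}\|_{L^1}$. So you need $\eta\le c\,\delta$ with $c$ universal.

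In Step 1, however, $\ell_0\ge\|\M_{\beta_*}\|_{L^\infty}$ forces $\delta\le 1-\M_{\beta_*}(0)$. Since $\M_{\beta_*}$ is maximal at $v=0$, any approximant lying below $\M_{\beta_*}$ has error at least
\[
\eta\ge\|(f^{\rm in}-\M_{\beta_*})^+\|_{L^1}\ge\int_{\{f^{\rm in}=1\}}\big(1-\M_{\beta_*}\big)\,\d v\ge\big(1-\M_{\beta_*}(0)\big)\,\big|\{f^{\rm in}=1\}\big| .
\]
Hence $\eta/\delta\ge|\{f^{\rm in}=1\}|$ for every $\beta_*$. So $\eta$ is not a free parameter once $\beta_*$ is fixed, and the scheme breaks down precisely for cold data, which is the case the theorem targets.

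The warning sign is that your bound holds uniformly down to $t=0$, which is why you only need $T_\star>1$. But already at $t=0$ one has $\|(f^{\rm in}-\ell_0)^+\|_{L^1}\ge(1-\ell_0)|\{f^{\rm in}=1\}|$ for every level.

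The missing idea is to take the level from the \emph{equilibrium} and the smallness from \emph{large time}. Set $\ell_0=\|\M_{\beta_0}\|_{L^\infty}=1-\delta_0$, where $\delta_0$ depends only on $\varrho_{\rm in}$. Then use $\|(f(t)-\ell_0)^+\|_{L^1}\le\int(f(t)-\M_{\beta_0})^+\,\d v$. By Theorem \ref{theo:L1imp}, this is as small as the iteration requires (in terms of $\delta_0$ only) for $t\ge T_\star(\eta)$. That theorem combines three things:
\begin{enumerate}
\item a same-mass approximant $g^{\rm in}_{\varepsilon,R}\le\M_{\beta_R(\varepsilon)}$;
\item the $L^1$-contraction of Proposition \ref{prop:stable}, which carries no Gronwall factor;
\item exponential convergence of $g_{\varepsilon,R}(t)$ to the same $\M_{\beta_0}$.
\end{enumerate}
This, not the unit window, is why $T_\star$ is large.

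Your ``fallback'' essentially contains this argument. With $f^{\rm in}_\eta=\min(f^{\rm in},\M_{\beta_*})$, the approximant has smaller mass, so it converges to $\M_{\beta_0^\eta}\le\M_{\beta_0}$. This gives $\int(f(t)-\ell_0)^+\,\d v\le\eta+C_{\beta_*}e^{-\lambda_{\beta_*}t}$ with $\lambda_{\beta_*}=\beta_*/(1+\beta_*)$. But it only works if the level is tied to $\M_{\beta_0}$ rather than to $\M_{\beta_*}$, and it must be the main argument, not a contingency.

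Finally, the $|v|$-weight in the drift needs no localisation or Mehler kernel. On $\{f\ge\ell\}$, $f(1-f)$ is a function of $F_\ell$, so $f(1-f)\nabla F_\ell=\nabla\Psi(F_\ell)$ for a suitable $\Psi$. Then $\int v\cdot\nabla\Psi(F_\ell)\,\d v=-3\int\Psi(F_\ell)\,\d v$, which gives the weight-free estimate of Lemma \ref{lem:fl+}.
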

Theorem \ref{theo:satgap} leads to quantitative convergence for general initial data.
\begin{theo}[\textit{\textbf{Quantitative convergence}}]\label{theo:quantitative}
Let $p >3,$ and $f^{\rm in} \in L^{1}_{p}(\R^{3})$ be such that $0\leq  f^{\rm in} \leq  {\frac{1}{\dd_{0}}}$   and let $f=f(t,v)$ be the solution to the Cauchy problem \eqref{LFD}.  Then,
$$\mathcal{H}(f(t))-\mathcal{H}(\M_{\beta_{0}}) \leq \left(\mathcal{H}(f^{\rm in})-\mathcal{H}(\M_{\beta_{0}})\right)\exp\left(-2\kappa_0 (t - T_\star)\right)\qquad t\geq T_\star,$$
and, as a consequence of Csisz\'{a}r--Kullback inequality for fermions,
$$\left\|f(t)-\M_{\beta_{0}}\right\|_{L^1} \leq \sqrt{2\varrho_{\rm in}\left(\mathcal{H}(f^{\rm in})-\mathcal{H}(\M_{\beta_{0}})\right)}\,\exp\left(-\kappa_0\,(t-T_\star)\right) \qquad t \geq T_\star,$$
where $\M_{\beta_{0}}$ is the corresponding Fermi-Dirac statistic associated to $f^{{\rm in}}$, and $\kappa_0$ and $T_\star$ given in Theorem \ref{theo:satgap}.
\end{theo}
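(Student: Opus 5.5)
Given Theorem \ref{theo:satgap}, the proof of Theorem \ref{theo:quantitative} is a classical entropy--entropy production argument, run only on $[T_\star,\infty)$. The idea is that once $1-\dd_0 f\geq\kappa_0$ uniformly, the degenerate mobility in \eqref{FDEntropyP} is bounded below, and we recover a log-Sobolev-type functional inequality. The plan has four steps.

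\textbf{Step 1 (Fermi--Dirac entropy inequality with saturation gap).} Since mass, momentum and energy are conserved, the relative entropy is
$$\mathcal{H}(f)-\mathcal{H}(\M_{\beta_0})=\int_{\R^3}\Phi\big(f\,|\,\M_{\beta_0}\big)\,\d v.$$
Here $\Phi$ is a Bregman divergence of the convex function $s\log s+\dd_0^{-1}(1-\dd_0 s)\log(1-\dd_0 s)$. It appears because $\log\frac{\M_{\beta_0}}{1-\dd_0\M_{\beta_0}}=-\log\beta_0-\frac12|v|^2$, so the linear terms are absorbed by conservation of mass and energy.

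Write $h=\log\frac{f}{1-\dd_0 f}+\frac12|v|^2$. Then
$$\mathscr{D}(f)=\int f(1-\dd_0f)|\nabla h|^2\,\d v\geq\kappa_0\int f|\nabla h|^2\,\d v \qquad\text{for } t\geq T_\star.$$
I would invoke the entropy inequality of \cite{CLR}, following the approach of \cite{carrillo}. This is the inequality used in \cite[Theorem 3.5]{CLR}, in the form
$$\mathscr{D}(g)\geq 2\kappa\,\big(\mathcal{H}(g)-\mathcal{H}(\M_{\beta})\big),\qquad \kappa=\inf_v(1-\dd_0 g),$$
valid for $g$ sharing the mass of $\M_\beta$. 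Its proof uses the displacement convexity of the Fermi--Dirac free energy in a harmonic confinement. Alternatively, one can use the Bakry--\'Emery computation for the nonlinear drift-diffusion $\partial_t f=\nabla\cdot(f(1-\dd_0 f)\nabla h)$, where the Hessian of the potential $\frac12|v|^2$ equals the identity and the lower bound $\kappa_0$ on the mobility gives rate $2\kappa_0$. If the paper's earlier sections contain this inequality with the constant $2\kappa$, I simply quote it.

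\textbf{Step 2 (Gronwall).} By \eqref{FDE-EP}, for $t\geq T_\star$,
$$\frac{\d}{\d t}\big(\mathcal{H}(f(t))-\mathcal{H}(\M_{\beta_0})\big)\leq-2\kappa_0\big(\mathcal{H}(f(t))-\mathcal{H}(\M_{\beta_0})\big),$$
where Theorem \ref{theo:satgap} provides $\kappa_0(t)\geq\kappa_0$. Integrating from $T_\star$ gives decay from $\mathcal{H}(f(T_\star))-\mathcal{H}(\M_{\beta_0})$. Monotonicity of $\mathcal{H}$ along the flow, again from \eqref{FDE-EP}, then bounds this by $\mathcal{H}(f^{\rm in})-\mathcal{H}(\M_{\beta_0})$, which gives the first estimate.

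\textbf{Step 3 (Csisz\'ar--Kullback).} The Fermi--Dirac version of the Csisz\'ar--Kullback inequality reads
$$\|f-\M_{\beta_0}\|_{L^1}^2\leq 2\varrho_{\rm in}\big(\mathcal{H}(f)-\mathcal{H}(\M_{\beta_0})\big).$$
To prove it, note that the second derivative of the entropy density is $\frac{1}{s(1-\dd_0 s)}\geq\frac1s$. Hence $\Phi(f|\M)\geq f\log\frac f{\M}-f+\M$, which reduces matters to the classical Csisz\'ar--Kullback inequality with equal masses. Taking square roots yields the $L^1$ rate $e^{-\kappa_0(t-T_\star)}$.

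\textbf{Main obstacle.} The delicate point is Step 1. The rigorous justification of the functional inequality with the sharp constant $2\kappa_0$ requires care, since the degenerate mobility can only be bounded below by $\kappa_0$ and not more. One also has to check the regularity and decay needed for the Bakry--\'Emery computation, which I would justify using the $L^1_p$ moments and parabolic smoothing available for $t\geq T_\star>1$.
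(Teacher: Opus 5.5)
Your proposal is correct, and Steps 1--2 follow the paper's argument. The gap gives $\mathscr{D}(f)\geq\kappa_0\int f\,|v+\nabla_v\log\frac{f}{1-\dd_0 f}|^2\,\d v$ for $t\geq T_\star$. The generalised logarithmic Sobolev inequality \cite[Theorem 17]{carrillo}, used as in \cite{CLR}, bounds this from below by $2\kappa_0(\mathcal{H}(f)-\mathcal{H}(\M_{\beta_0}))$. Gr\"onwall together with $\mathcal{H}(f(T_\star))\leq\mathcal{H}(f^{\rm in})$ then concludes.

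The genuine difference is Step 3. The paper quotes the fermionic Csisz\'ar--Kullback inequality of \cite[Theorem 3]{luwennberg} and only asserts that its proof adapts to the present entropy, which contains the kinetic energy. You prove it outright. From $\psi''(s)=\frac{1}{s(1-\dd_0 s)}\geq\frac1s$ and the integral form of the Bregman remainder you get $\Phi(f|\M_{\beta_0})\geq f\log\frac{f}{\M_{\beta_0}}-f+\M_{\beta_0}$ pointwise. You then apply the classical inequality to two densities of equal mass $\varrho_{\rm in}$. This is shorter and self-contained, gives exactly the constant $2\varrho_{\rm in}$, and shows why no energy matching is needed.

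Two corrections.

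\textbf{Conservation laws.} In \eqref{LFD} only mass is conserved; momentum and kinetic energy are not, e.g. $\frac{\d}{\d t}\int f|v|^2\d v=6\varrho_{\rm in}-2\int|v|^2f(1-\dd_0 f)\d v$. Your identity $\mathcal{H}(f)-\mathcal{H}(\M_{\beta_0})=\int\Phi(f|\M_{\beta_0})\,\d v$ still holds, but for a different reason. The $\frac12|v|^2$ part of $-\log\frac{\M_{\beta_0}}{1-\dd_0\M_{\beta_0}}$ produces exactly the kinetic-energy difference already built into $\mathcal{H}(f)-\mathcal{H}(\M_{\beta_0})$. Only $\int(f-\M_{\beta_0})\,\d v=0$ is used.

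\textbf{How the mobility bound is used.} The lower bound on the mobility should be used statically, as in your first route. The inequality $\int f|\nabla h|^2\,\d v\geq 2(\mathcal{H}(f)-\mathcal{H}(\M_{\beta_0}))$ is a functional inequality for the linear-mobility problem with $1$-convex confinement. Its hypothesis holds because the pressure $P(s)=-\dd_0^{-1}\log(1-\dd_0 s)$ satisfies $sP'(s)\geq P(s)\geq\frac23P(s)$. It is applied at each fixed $t\geq T_\star$, so none of the regularity issues you list as the main obstacle arise. Your alternative of running Bakry--\'Emery directly on the nonlinear-mobility equation is not justified as stated. Derivatives of $f(1-\dd_0 f)$ enter that computation and are not controlled by a lower bound on $1-\dd_0 f$, so that alternative should be dropped.
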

\begin{proof}
The argument follows \cite[Theorem 3.5]{CLR}; the central point is the inequality
\begin{align*}
\mathscr{D}(f) = &\int_{\mathbb{R}^3}f(1-\dd_0 f)\Big| v + \nabla_{v}\log\Big(\frac{f}{1-\dd_0 f}\Big)\Big|^2{\rm d}v\\ &\geq \kappa_0\int_{\mathbb{R}^3}f\Big| v + \nabla_{v}\log\Big(\frac{f}{1-\dd_0 f}\Big)\Big|^{2}{\rm d}v \geq 2\,\kappa_0\,\big( \mathcal{H}(f) - \mathcal{H}(\M_{\beta_0})\big)\qquad t\geq T_\star\,,
\end{align*}
which follows from Theorem \ref{theo:satgap} and the generalised logarithmic Sobolev inequality \cite[Theorem 17]{carrillo}, as in \cite{CLR}. Gr\"{o}nwall's lemma applied to \eqref{FDE-EP}, together with the monotonicity $\mathcal{H}(f(T_*))\leq \mathcal{H}(f^{{\rm in}})$, gives the result. For the $L^1$ convergence, the Csisz\'{a}r--Kullback inequality for fermions can be found in \cite[Theorem 3]{luwennberg} (see also \cite[Remark p. 386]{Lu} and \cite{AMTU}). The version in \cite{luwennberg,Lu} concerns a slightly different entropy, not involving the kinetic energy, because the associated steady state is assumed to share both mass and kinetic energy with the solution. In the Fokker-Planck case studied here, kinetic energy is not conserved, but entropy \eqref{FDEntropy} takes this into account and the proofs of \cite{Lu,luwennberg} adapt directly.\end{proof}

We assume in the sequel, without loss of generality and for simplicity, that
$$\dd_0=1.$$  Indeed, note that if $f$ solves the equation \eqref{LFD}, then $F=\dd_0f$ solves the Fermi-Dirac-Fokker-Planck equation with $\dd_0=1$, namely,
\begin{equation*}
\begin{cases}
\partial_{t} F(t,v) \!\!\!\!\!&=  {\grad}_v \cdot \left(\grad_{v} F   +\,v F (1- F) \right),\qquad (t,v)\in (0,\infty)\times\mathbb{R}^{3}\,, \\
\;F(0,v)&=F^{{\rm in}}(v)=\dd_0f^{\rm in}(v)\,.\end{cases}
\end{equation*}
\subsection*{Acknowledgments.} B. L. gratefully acknowledges the financial support from the Italian Ministry of Education, University and Research (MIUR), Dipartimenti di Eccellenza grant 2022-2027, as well as the support from the de Castro Statistics Initiative, Collegio Carlo Alberto (Torino).

\section{Preliminary results} 
In this section we recall some important results regarding solutions to \eqref{LFD} which can be found in the reference \cite{CLR}.  Let us introduce the spaces that are relevant in the discussion.   For any $T >0$, $p >3$, define
$$\Upsilon:=L^{\infty}(\R^{3}) \cap L^{1}_{1}(\R^{3}) \cap L^{p}_{1}(\R^{3}) \qquad\text{and}\qquad \Upsilon_{T}:=\mathcal{C}([0,T],\Upsilon)\,,$$
where $\Upsilon$ is endowed with the norm 
$$\|g\|_{\Upsilon}:=\max\left(\|g\|_{L^\infty},\|\,g\|_{L^{1}_1},\| g\|_{L^{p}_1}\right), \qquad g \in \Upsilon\,,$$
while $\Upsilon_{T}$ is endowed with the corresponding natural norm 
$$\|g\|_{\Upsilon_{T}}=\max_{0\leq t \leq T}\|g(t)\|_{\Upsilon}\,.$$
The well-posedness of the equation \eqref{LFD} is established in \cite[Theorem 2.11]{CLR}\,.   \begin{theo}\label{theo:cauchy}
Let $p >3,$ and $f^{\rm in} \in L^{1}_{p}(\R^{3})$ be such that $0\leq  f^{\rm in} \leq 1.$ Then, the Cauchy problem \eqref{LFD} with initial condition $f^{\rm in}$ has a unique global solution $f=f(t,v)$ such that
$$\int_{\R^{3}}f(t,v)\d v=\int_{\R^{3}}f^{\rm in}(v)\d v=\varrho_{\rm in}\qquad\text{and}\qquad 0 \leq f(t,v) \leq 1, \quad \forall\, t\geq 0.$$
In addition, for all $T >0$ we define the Banach space $X_T$ through the norm
$$\|g\|_{X_{T}}=\max\left\{\|g\|_{\Upsilon_{T}}\;;\;\sup_{0< t < T}\sqrt{e^{2t}-1}\,\|\nabla_{v} g(t)\|_{L^{p}_1}\;;\;\sup_{0< t < T}\sqrt{e^{2t}-1}\,\| g(t)\|_{L^{1}_1}\right\}\,.$$
Finally, one has the decay of the entropy
$$\mathcal{H}(f^{\rm in}) \geq\mathcal{H}(f(t)) \geq  \mathcal{H}(\M_{\beta_{0}}), \qquad \forall\, t \geq0.$$
\end{theo}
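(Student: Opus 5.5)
The plan is a fixed-point argument in $X_T$ followed by a priori estimates that globalise the solution; this is the route of \cite{CLR}. Write \eqref{LFD} with $\dd_0=1$ in mild form around the classical Fokker--Planck semigroup $S(t)$ generated by $L g=\nabla_v\cdot(\nabla_v g+vg)$:
\begin{equation*}
f(t)=S(t)f^{\rm in}-\int_0^t S(t-s)\,\nabla_v\cdot\big(v f^2(s)\big)\,\d s .
\end{equation*}
$S(t)$ is explicit: a rescaled Gaussian convolution after the change of variables $v\mapsto e^{t}v$. This gives its action on $L^1_1$, $L^p_1$ and $L^\infty$, together with the smoothing estimate $\|\nabla_v S(t)g\|\lesssim (e^{2t}-1)^{-1/2}\|g\|$. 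These are the weights that appear in the norm of $X_T$.

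The first step is to show that the Duhamel map $\Phi$ is a contraction on a ball of $X_T$ for $T$ small, depending only on $\|f^{\rm in}\|_{\Upsilon}$. The quadratic term is handled by moving the divergence onto the kernel, $S(t-s)\nabla_v\cdot$, which costs $(e^{2(t-s)}-1)^{-1/2}$. This singularity is integrable, so $\|\Phi f-\Phi g\|_{X_T}\lesssim \sqrt{T}\,(\|f\|_{X_T}+\|g\|_{X_T})\|f-g\|_{X_T}$. The factor $v$ is absorbed by the weight $(1+|v|)$ in the $L^1_1$ and $L^p_1$ norms, together with a moment estimate for $S$. The condition $p>3$ is used to control $\|f\|_{L^\infty}$ by $\|f\|_{L^p}+\|\nabla_v f\|_{L^p}$ (Morrey), which closes the nonlinear bound. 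This yields a unique local mild solution, which parabolic regularity makes classical for $t>0$.

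The second step is the a priori bounds that allow iteration to a global solution.
\begin{itemize}
\item \emph{Mass conservation} follows by integrating the divergence-form equation, which is justified by the $L^1_1$ and gradient bounds.
\item \emph{The bound $0\le f\le 1$}: apply the comparison principle to $\partial_t f=\Delta f+ \nabla\cdot(v f(1-f))$. The constants $0$ and $1$ are both stationary solutions, since $v\,u(1-u)$ vanishes at $u=0,1$. A Stampacchia argument, testing with $f_-$ and $(f-1)_+$, then shows these truncations stay zero. The lower-order term is controlled because $f$ is locally Lipschitz in $t$ and smooth.
\item \emph{Global moment bounds} in $L^1_1$ and $L^p_1$, growing at most exponentially in $t$, are obtained from the energy identity $\frac{\d}{\d t}\int f|v|^2=6\varrho-2\int|v|^2f(1-f)$ together with $0\le f\le 1$.
\end{itemize}
These bounds prevent the $X_T$ norm from blowing up, so the local solution extends to all $t\ge0$.

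The third step is the entropy inequality. For smooth positive approximations, take $\log\frac{f}{1-f}+\frac{|v|^2}{2}$ as a test function, which gives \eqref{FDE-EP}. Integrating it yields $\mathcal{H}(f(t))\le\mathcal{H}(f^{\rm in})$. The lower bound $\mathcal{H}(f(t))\ge\mathcal{H}(\M_{\beta_0})$ comes from minimising the convex functional $\mathcal{H}$ under the mass constraint. The Euler--Lagrange equation $\log\frac{g}{1-g}+\frac{|v|^2}{2}=\text{const}$ forces $g=\M_\beta$, and mass conservation fixes $\beta=\beta_0$.

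The main obstacle I expect is rigour near the saturation set $\{f=1\}$ and near $\{f=0\}$, where $\log\frac{f}{1-f}$ is singular. I would regularise the initial datum, for example $f^{\rm in}_\epsilon=(1-\epsilon)f^{\rm in}*\eta_\epsilon+\epsilon\M_1$, so that $\epsilon'\le f_\epsilon\le 1-\epsilon'$ locally. I would then pass to the limit using $L^1$ stability of the Duhamel map and lower semicontinuity of $\mathcal{H}$.
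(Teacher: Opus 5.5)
The paper does not prove this statement. It quotes it from \cite[Theorem 2.11]{CLR}, and your architecture (a Duhamel fixed point in $X_T$ around the Fokker--Planck semigroup, globalisation via $0\le f\le 1$ and moment bounds, then entropy dissipation) is that route. The step that fails as you wrote it is the local contraction.

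\textbf{Gradient component.} For $\sqrt{e^{2t}-1}\,\|\nabla_v\Phi f(t)\|_{L^p_1}$, moving the divergence onto the kernel puts \emph{two} derivatives on it. The operator $\nabla_vS(\tau)\nabla_v\cdot$ costs $(e^{2\tau}-1)^{-1}$, which is not integrable at $s=t$.

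\textbf{$L^p_1$ component.} Absorbing $v$ into the weight fails at equal exponents. You need $\|(1+|v|)\,|v|\,f^2\|_{L^p}$, i.e.\ one more power of $(1+|v|)$ on $f$ (or $|v|f\in L^\infty$) than $\Upsilon$ controls. For $L^1_1$ it is fine, since $(1+|v|)f\in L^2$ by interpolation.

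\textbf{Morrey does not repair this.} It gives $\|(1+|v|)f(s)\|_{L^\infty}\lesssim\|f(s)\|_{L^p_1}+\|\nabla_vf(s)\|_{L^p_1}$, but in $X_T$ the gradient is only bounded by $(e^{2s}-1)^{-1/2}\|f\|_{X_T}$. The Duhamel integral then behaves like $\int_0^t(t-s)^{-1/2}s^{-1/2}\,\d s=\pi$ for the $L^p_1$ component, so there is no small factor and no contraction for short times. Once one derivative is moved onto $vf^2$, the gradient component behaves like $\int_0^t(t-s)^{-1/2}s^{-1}\,\d s=\infty$. For the same reason, Morrey cannot give $\sup_{t\le T}\|\Phi f(t)\|_{L^\infty}$ near $t=0$. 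Since $\|f\|_{L^\infty}$ is already part of the norm, this is not where $p>3$ is needed.

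\textbf{What closes the estimate.}
(a) Use the commutation $\partial_{v_i}S(\tau)=e^{\tau}S(\tau)\partial_{v_i}$, immediate from the explicit kernel. It gives $\partial_{v_i}S(\tau)\partial_{v_j}h_j=\partial_{v_j}S(\tau)[\partial_{v_i}h_j]$, so one derivative falls on the kernel and one on $h=vf^2$.
(b) Use H\"older to place the nonlinearity in $L^{p/2}_1$: $\|vf^2\|_{L^{p/2}_1}\le\|f\|_{L^p_1}^2$ and $\|v_jf\,\partial_{v_i}f\|_{L^{p/2}_1}\le\|f\|_{L^p_1}\|\nabla_vf\|_{L^p_1}$.
(c) Use the $L^{p/2}\to L^p$ smoothing of the kernel gradient, and $L^p\to L^\infty$ for the sup-norm with $\|vf^2\|_{L^p}\le\|f\|_{L^\infty}\|f\|_{L^p_1}$. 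The singularity $(e^{2\tau}-1)^{-\frac12-\frac{3}{2p}}$ is integrable exactly when $p>3$; this is where the hypothesis enters.
The gradient component then leads to a Beta integral of size $T^{\frac12-\frac{3}{2p}}$, which replaces your $\sqrt{T}$.

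\textbf{Smaller points.}
\begin{enumerate}
\item $f^{\rm in}\in\Upsilon$ because $(f^{\rm in})^p\le f^{\rm in}$ gives $\|f^{\rm in}\|_{L^p_1}^p\le\|f^{\rm in}\|_{L^1_p}$. This is why the hypothesis is $L^1_p$.
\item The same observation reduces the a priori $L^p_1$ bound to propagating the $p$-th moment, via $\frac{\d}{\d t}\int|v|^pf\le p(p+1)\int|v|^{p-2}f$.
\item For the lower entropy bound, use the identity $\mathcal H(g)-\mathcal H(\M_{\beta_0})=\int[\phi(g)-\phi(\M_{\beta_0})-\phi'(\M_{\beta_0})(g-\M_{\beta_0})]\,\d v\ge0$. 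Here $\phi(x)=x\log x+(1-x)\log(1-x)$, $\phi'(\M_{\beta_0})=-\log\beta_0-\frac{|v|^2}{2}$, and the masses are equal. This avoids having to prove that a minimiser exists.
\end{enumerate}
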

We notice the following important stability result, in the $L^{1}$-metric, for solutions to \eqref{LFD}.
\begin{prop}[\textit{\textbf{$L^{1}$-contraction and comparison principle}}]\label{prop:stable} Let $T >0$ and consider $f \,,\, g \in X_{T}$ two solutions to \eqref{LFD} with respective initial data $f^{\rm in} \,,\, g^{\rm in} \in \Upsilon$.
Then,
$$\left\|f (t)-g(t)\right\|_{L^1} \leq \left\|f^{\rm in}-g^{\rm in}\right\|_{L^1}\,, \qquad  0 \leq t < T\,.$$
Furthermore, if $\;0 \leq f^{\rm in} \leq g^{\rm in} \leq 1$ then $0\leq f(t,v) \leq g(t,v)$ for $\;0 \leq t \leq T$.
\end{prop}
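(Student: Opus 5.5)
The plan is the standard Kato-type $L^1$-contraction argument for scalar parabolic equations with a Lipschitz nonlinearity in the drift. Set $h:=f-g$. Since both $f$ and $g$ solve \eqref{LFD} with $\dd_0=1$, subtracting the two equations gives
$$\partial_t h=\nabla_v\cdot\big(\nabla_v h+v\,h\,(1-f-g)\big),$$
because $f(1-f)-g(1-g)=(f-g)(1-f-g)$. The coefficient $a:=1-f-g$ satisfies $|a|\leq 1$, since $0\leq f,g\leq 1$ by Theorem \ref{theo:cauchy}. The drift term is therefore linear in $h$ with a bounded coefficient, and the equation is a linear Fokker--Planck-type equation for $h$ with coefficient $v\,a(t,v)$.

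To estimate $\|h(t)\|_{L^1}$, I would test this equation against $S_\varepsilon'(h)$, where $S_\varepsilon$ is a convex, $C^2$ regularisation of $|\cdot|$. A concrete choice is $S_\varepsilon(r)=\sqrt{r^2+\varepsilon^2}-\varepsilon$, or one with $S_\varepsilon''$ supported in $[-\varepsilon,\varepsilon]$ and bounded by $1/\varepsilon$. Integrating by parts, which is justified by the regularity in $X_T$ (gradient in $L^p_1$ for $t>0$, decay in $L^1_1$) together with a velocity cut-off $\chi_R$ that is removed at the end, gives
$$\frac{\d}{\d t}\int S_\varepsilon(h)\,\d v=-\int S_\varepsilon''(h)|\nabla_v h|^2\,\d v-\int S_\varepsilon''(h)\,h\,a\,v\cdot\nabla_v h\,\d v .$$
I would bound the second term by Young's inequality:
$$\Big|\int S_\varepsilon''(h)\,h\,a\,v\cdot\nabla_v h\,\d v\Big|\leq \int S_\varepsilon''(h)|\nabla_v h|^2\,\d v+\frac14\int S_\varepsilon''(h)\,h^2|v|^2\,\d v .$$
The first piece is absorbed by the dissipation term. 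In the remainder, $S_\varepsilon''(h)h^2\leq \varepsilon\,\ind_{\{|h|\leq\varepsilon\}}$ (or the analogous bound for the square-root choice), and this is multiplied by $|v|^2$. I expect this remainder to be the main obstacle: it is not integrable uniformly in $\varepsilon$ without a weight.

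To handle it, I would first work on a ball $|v|\leq R$ using the cut-off $\chi_R$. On that ball the remainder is at most $C R^2\varepsilon\cdot\min(|B_R|,\varepsilon^{-1}\|h\|_{L^1})$, which tends to $0$ as $\varepsilon\to0$ by dominated convergence, since $S_\varepsilon''(h)h^2\to0$ pointwise and is bounded by $C\min(\varepsilon,|h|)$. The boundary terms produced by $\nabla\chi_R$ involve $\int_{R<|v|<2R}\big(|\nabla_v h|/R+|h|\big)\,\d v$, using $|v\,\nabla\chi_R|\lesssim 1$. These vanish as $R\to\infty$ because $h\in L^1_1$ and $\nabla_v h\in L^p_1\subset L^1_{\mathrm{loc}}$ with the needed tail control. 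An alternative that avoids the weight is to write the drift term as $\int \nabla_v\big(G_\varepsilon(h)\big)\cdot a\,v\,\d v$ with $G_\varepsilon(r)=\int_0^r sS_\varepsilon''(s)\,\d s$, where $|G_\varepsilon|\leq\varepsilon$, and then integrate by parts. That requires $\nabla_v a$, which the $X_T$ regularity provides. Letting $\varepsilon\to0$ and then $R\to\infty$ yields $\frac{\d}{\d t}\|h(t)\|_{L^1}\leq 0$ for $t>0$. Continuity at $t=0$ in $\Upsilon$ then gives the contraction estimate.

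For the comparison principle, I would run the same argument with $S_\varepsilon$ approximating the positive part $r_+$ instead of $|r|$, again with $h=f-g$. This yields $\|(f-g)_+(t)\|_{L^1}\leq\|(f^{\rm in}-g^{\rm in})_+\|_{L^1}=0$, so $f\leq g$ almost everywhere. The bound $f\geq0$ follows from Theorem \ref{theo:cauchy}, or equivalently by comparing with the zero solution.
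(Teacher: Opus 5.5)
The paper gives no proof of this proposition. It is quoted from \cite{CLR} along with the rest of the well-posedness theory, so there is no in-paper argument to compare yours with. Your Kato-type argument is the standard one and it goes through. Once you localise with $\chi_R$ and bound $|v|^2\leq 4R^2$ on its support, the Young remainder integrated over $[s,t]$ is at most $C\|a\|_{L^\infty}^2R^2|B_{2R}|\,(t-s)\,\varepsilon$. So sending $\varepsilon\to0$ before $R\to\infty$ is the right order of limits. Continuity of $h$ in $\mathcal{C}([0,T],\Upsilon)$ then lets you pass $s\to0$.

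Three points need tightening.

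First, $|a|\leq 1$ follows from Theorem~\ref{theo:cauchy} only when $0\leq f^{\rm in},g^{\rm in}\leq 1$. The contraction, however, is stated for arbitrary data in $\Upsilon$. This costs you nothing: you only need $a\in L^\infty$, and $\|a\|_{L^\infty}\leq 1+\|f\|_{X_T}+\|g\|_{X_T}$.

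Second, the cut-off term containing $\nabla_v h$ is not controlled by $\nabla_v h\in L^p_1$ in the way you suggest. On $A_R=\{R<|v|<2R\}$, H\"older only gives $R^{-1}\int_{A_R}|\nabla_v h|\,\mathrm{d}v\lesssim R^{1-3/p}\|\nabla_v h\|_{L^p_1(A_R)}$, and $1-3/p>0$, so this bound need not vanish. Integrate by parts once more instead: $\int S_\varepsilon'(h)\nabla\chi_R\cdot\nabla_v h\,\mathrm{d}v=-\int S_\varepsilon(h)\,\Delta\chi_R\,\mathrm{d}v$. This is bounded by $CR^{-2}\int_{A_R}|h|\,\mathrm{d}v$ uniformly in $\varepsilon$. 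The other cut-off term is bounded by $C\|a\|_{L^\infty}\int_{A_R}|h|\,\mathrm{d}v$, and both vanish as $R\to\infty$. The same issue affects your $G_\varepsilon$ alternative, which is also fine once repaired. There, $\varepsilon\int|v||\nabla_v a|\,\mathrm{d}v$ need not be finite, so use $|G_\varepsilon(h)|\leq\min(|h|,\varepsilon)$ together with H\"older against $\nabla_v a\in L^p_1$.

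Third, your positive-part computation for the comparison principle is correct, and it has the merit of not using mass conservation. It is not needed, though. As Remark~\ref{nb:posi} records, mass conservation turns the contraction into $\int(f-g)^+\,\mathrm{d}v\leq\int(f^{\rm in}-g^{\rm in})^+\,\mathrm{d}v$, and the right-hand side is zero when $f^{\rm in}\leq g^{\rm in}$.
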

\begin{nb}\label{nb:posi} Due to the conservation of mass, the $L^{1}$ stability result of Proposition \ref{prop:stable} can be reformulated as
$$\int_{\R^{3}}\left(f(t,v)-g(t,v)\right)^{+}\d v \leq \int_{\R^{3}}\left(f^{\rm in}(v)-g^{\rm in}(v)\right)^{+}\d v, \qquad  0 \leq t < T\,,$$
since $x^{+}=\frac{1}{2}(|x|+x)$ for any $x\in \R.$ 
\end{nb} 
Regarding the long-time asymptotic behaviour of solutions to \eqref{LFD}, the following quantitative result is available; see \cite[Theorem 3.5]{CLR}.
\begin{theo}\label{theo:CLR} Let $p >3,$  and $f^{\rm in} \in L^{1}_{p}(\R^{3})$ be such that $0\leq  f^{\rm in} \leq 1$.   Denote $\M_{\beta_{0}}$ the associated Fermi-Dirac distribution with the same mass $\varrho_{\rm in}>0$ as $f^{\rm in}$, and let $f=f(t,v)$ be the solution to the Cauchy problem \eqref{LFD}.  Furthermore, assume that there exists $\beta_{\star} >0$ such that
\begin{equation}\label{eq:UPP}
0 \leq f^{\rm in}  \leq \M_{\beta_{\star}} < 1\,.
\end{equation}
Then, with
$$\lambda=1-\|\M_{\beta_\star}\|_{L^\infty}=\frac{\beta_\star}{1+\beta_\star}>0,$$
$$\mathcal{H}(f(t))-\mathcal{H}(\M_{\beta_{0}}) \leq \left(\mathcal{H}(f^{\rm in})-\mathcal{H}(\M_{\beta_{0}})\right)\exp\left(-2\lambda t\right)\,,$$
and, as a consequence,
$$\left\|f(t)-\M_{\beta_{0}}\right\|_{L^1} \leq \sqrt{2\varrho_{\rm in}\left(\mathcal{H}(f^{\rm in})-\mathcal{H}(\M_{\beta_{0}})\right)}\,\exp\left(-\lambda\,t\right), \qquad t \geq0.$$
\end{theo}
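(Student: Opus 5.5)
The plan is to reduce everything to a uniform saturation gap and then run a standard entropy/entropy-production argument. Concretely, there are three steps.

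\textbf{Step 1: propagate the upper bound.} Since $\M_{\beta_\star}$ is a stationary solution of \eqref{LFD} (with $\dd_0=1$), it is itself a solution in $X_T$ for every $T>0$. Indeed, $\M_{\beta_\star}\leq (1+\beta_\star)^{-1}<1$, it has Gaussian decay, and all its derivatives are bounded, so $\M_{\beta_\star}\in\Upsilon$ with the required gradient bounds. The assumption \eqref{eq:UPP} gives $0\leq f^{\rm in}\leq \M_{\beta_\star}\leq 1$. The comparison principle of Proposition \ref{prop:stable} then yields $f(t,v)\leq \M_{\beta_\star}(v)$ for all $t\geq0$. Consequently
$$1-f(t,v)\geq 1-\|\M_{\beta_\star}\|_{L^\infty}=\frac{\beta_\star}{1+\beta_\star}=\lambda \qquad\text{for all } t\geq 0,\ v\in\R^3.$$
This is the saturation gap, valid uniformly in time.

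\textbf{Step 2: entropy/entropy-production inequality.} Insert the gap into \eqref{FDEntropyP}:
$$\mathscr{D}(f(t))\geq \lambda\int_{\R^3} f\,\Big|v+\nabla_v\log\Big(\frac{f}{1-f}\Big)\Big|^2\d v .$$
The right-hand integral is the relative Fisher information associated with the nonlinear diffusion. Its driving potential is $\tfrac12|v|^2$, which is uniformly convex with Hessian equal to the identity. It carries an enthalpy $h(f)=\log\frac{f}{1-f}$, whose associated entropy density $f\log f+(1-f)\log(1-f)$ is displacement convex. The generalised logarithmic Sobolev inequality \cite[Theorem 17]{carrillo} then gives
$$\int_{\R^3} f\,\Big|v+\nabla_v\log\Big(\frac{f}{1-f}\Big)\Big|^2\d v\geq 2\big(\mathcal{H}(f)-\mathcal{H}(\M_{\beta_0})\big).$$
Here $\M_{\beta_0}$ is the minimiser of $\mathcal{H}$ among densities with mass $\varrho_{\rm in}$. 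This is precisely why $\beta_0$ is fixed by \eqref{FDIC}: mass is conserved by Theorem \ref{theo:cauchy}.

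\textbf{Step 3: Gr\"onwall and Csisz\'ar--Kullback.} Combining Step 2 with \eqref{FDE-EP} gives
$$\frac{\d}{\d t}\big(\mathcal{H}(f)-\mathcal{H}(\M_{\beta_0})\big)\leq -2\lambda\big(\mathcal{H}(f)-\mathcal{H}(\M_{\beta_0})\big),$$
and Gr\"onwall's lemma yields the exponential entropy decay. The $L^1$ bound follows from the Csisz\'ar--Kullback inequality for fermions \cite{luwennberg,Lu}, which states $\|f-\M_{\beta_0}\|_{L^1}^2\leq 2\varrho_{\rm in}\big(\mathcal{H}(f)-\mathcal{H}(\M_{\beta_0})\big)$. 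It must be adapted to the entropy \eqref{FDEntropy}, which includes the kinetic energy. The adaptation works because $\M_{\beta_0}$ has the form $e^{-|v|^2/2-\mu}$ in the variable $f/(1-f)$. This means the relative entropy $\mathcal{H}(f)-\mathcal{H}(\M_{\beta_0})$ rewrites as a Bregman-type divergence with no linear term, once equality of masses is used.

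\textbf{Main obstacle.} The main difficulty is the functional inequality in Step 2. One must check that the hypotheses of \cite[Theorem 17]{carrillo} hold: the pressure $P(f)$ built from $h$ satisfies McCann's condition, and the solution has enough regularity and decay, which the $X_T$ regularity and the moment bounds supply. If direct verification is delicate, I would first prove the inequality for smooth, rapidly decaying $f$ bounded away from $0$ and $1$, using the Bakry--\'Emery computation along the flow. I would then extend it to general $f$ by approximation, using lower semicontinuity of the Fisher information.
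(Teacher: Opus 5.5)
Your proposal is correct and follows the route the paper indicates for this result, which it takes from \cite{CLR}. You compare with the stationary solution $\M_{\beta_\star}$ via Proposition \ref{prop:stable} to obtain the uniform gap $1-f\geq\lambda$, then apply the generalised logarithmic Sobolev inequality \cite[Theorem 17]{carrillo}, Gr\"onwall's lemma, and the fermionic Csisz\'ar--Kullback inequality; your side checks (stationarity of $\M_{\beta_\star}$, McCann's condition for $P(f)=-\log(1-f)$, the Bregman form of $\mathcal{H}(f)-\mathcal{H}(\M_{\beta_0})$ under equal masses, and the constant $2\varrho_{\rm in}$) are all accurate.
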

Notice that time convergence to the Fermi-Dirac distribution without quantitative estimate of the rate can be deduced for any initial datum satisfying the assumptions of Theorem \ref{theo:cauchy} using a compactness argument. Namely, the following has been proven in \cite[Theorem 3.3]{CLR}.
 \begin{theo}[\textit{\textbf{Qualitative convergence}}]\label{theo:genCon} Let $p >3,$ and $f^{\rm in} \in L^{1}_{p}(\R^{3})$ be such that $0\leq  f^{\rm in} \leq 1$.  Denote $\M_{\beta_{0}}$ the associated Fermi-Dirac distribution with the same mass $\varrho_{\rm in}>0$ as $f^{\rm in}$.  Then 
$$\lim_{t\to\infty}\left\|f(t)-\M_{\beta_{0}}\right\|_{L^1}=0\,.$$
\end{theo}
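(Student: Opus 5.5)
The plan is a density argument, which avoids compactness altogether: approximate $f^{\rm in}$ in $L^1$ by data satisfying the hypothesis \eqref{eq:UPP} of Theorem \ref{theo:CLR}, and transfer the convergence to $f$ using the $L^1$-contraction of Proposition \ref{prop:stable}.

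\textbf{Step 1 (approximating data).} For $\beta_\star>0$ I set $g^{\rm in}_{\beta_\star}:=\min\left(f^{\rm in},\M_{\beta_\star}\right)$. Then $0\leq g^{\rm in}_{\beta_\star}\leq \M_{\beta_\star}<1$. Since $g^{\rm in}_{\beta_\star}\leq f^{\rm in}$, it lies in $L^1_p(\R^3)$, and it inherits the regularity needed for Theorem \ref{theo:cauchy}. As $\beta_\star\to0$ we have $\M_{\beta_\star}\uparrow 1$ pointwise, so $g^{\rm in}_{\beta_\star}\to f^{\rm in}$ pointwise; note that the points where $f^{\rm in}=1$ are not a problem, since there $\M_{\beta_\star}\to1$ as well. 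Because $0\leq f^{\rm in}-g^{\rm in}_{\beta_\star}\leq f^{\rm in}\in L^1$, dominated convergence gives
$$\varepsilon(\beta_\star):=\|f^{\rm in}-g^{\rm in}_{\beta_\star}\|_{L^1}\longrightarrow 0 \quad \text{as } \beta_\star\to0.$$
In particular the mass $\varrho_{\beta_\star}$ of $g^{\rm in}_{\beta_\star}$ tends to $\varrho_{\rm in}$. I do not need the moment conditions \eqref{IC0} in their exact form here, since Theorem \ref{theo:CLR} only uses mass and the bound \eqref{eq:UPP}. If a zero-momentum normalisation is needed, it is preserved whenever $f^{\rm in}$ is radial, and in general it is irrelevant because the steady state is selected by mass alone.

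\textbf{Step 2 (triangle inequality).} Let $g_{\beta_\star}(t)$ be the solution issued from $g^{\rm in}_{\beta_\star}$. Let $\M_{b}$ be the Fermi-Dirac distribution with mass $\varrho_{\beta_\star}$, where $b=\beta(\varrho_{\beta_\star})$. I split
$$\|f(t)-\M_{\beta_0}\|_{L^1}\leq \|f(t)-g_{\beta_\star}(t)\|_{L^1}+\|g_{\beta_\star}(t)-\M_{b}\|_{L^1}+\|\M_b-\M_{\beta_0}\|_{L^1}$$
and bound the three terms as follows.
\begin{enumerate}
\item[(i)] By Proposition \ref{prop:stable}, the first term is at most $\varepsilon(\beta_\star)$ for all $t$.
\item[(ii)] By Theorem \ref{theo:CLR}, the second term is at most $C_{\beta_\star}e^{-\lambda_{\beta_\star}t}$. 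The constant $C_{\beta_\star}$ is finite because the entropy of $g^{\rm in}_{\beta_\star}$ is finite: $g^{\rm in}_{\beta_\star}$ has a finite second moment since $p>3$, and the function $x\log x+(1-x)\log(1-x)$ is bounded on $[0,1]$. The negative part of the entropy is controlled by the usual Gaussian-weight argument.
\item[(iii)] For the third term, the map $\varrho\mapsto\beta(\varrho)$ is continuous, because $\beta\mapsto\int\M_\beta$ is continuous and strictly decreasing. Moreover $\beta\mapsto\M_\beta$ is continuous in $L^1$, by monotonicity and dominated convergence with Gaussian tails. Hence $\|\M_b-\M_{\beta_0}\|_{L^1}=:\eta(\beta_\star)\to0$.
\end{enumerate}

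\textbf{Step 3 (conclusion).} Combining the three bounds,
$$\limsup_{t\to\infty}\|f(t)-\M_{\beta_0}\|_{L^1}\leq \varepsilon(\beta_\star)+\eta(\beta_\star),$$
and letting $\beta_\star\to0$ gives the claim.

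The only delicate points are the continuity in Step 2(iii) and the finiteness of the entropy of the truncated datum. I expect both to be routine. The argument gives no rate, because $\lambda_{\beta_\star}=\beta_\star/(1+\beta_\star)\to0$; this is consistent with the statement being purely qualitative. Quantifying $\varepsilon(\beta_\star)$ in terms of $\beta_\star$ is exactly what the rest of the paper will need to do.
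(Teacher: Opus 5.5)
Your argument is correct, but it is not how the paper handles this statement. The paper does not prove Theorem \ref{theo:genCon}; it quotes it from \cite[Theorem 3.3]{CLR}, where it is obtained by a compactness argument and comes with no rate.

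What you do instead is the density-plus-contraction strategy the paper uses in Section 3 for Theorem \ref{theo:L1imp}, which the paper itself calls a quantitative version of Theorem \ref{theo:genCon}. Your approximation is simpler than the paper's. The paper builds the mass-preserving datum $g^{\rm in}_{\varepsilon,R}$, capped at $1-\varepsilon$ and supported in $B_{2R}$, so that the limiting equilibrium is exactly $\M_{\beta_0}$. Your truncation $\min(f^{\rm in},\M_{\beta_\star})$ loses mass, but this costs nothing, and two of your steps become exact identities. Since $g^{\rm in}_{\beta_\star}\le f^{\rm in}$, the comparison principle of Proposition \ref{prop:stable} together with mass conservation gives $\|f(t)-g_{\beta_\star}(t)\|_{L^1}=\varrho_{\rm in}-\varrho_{\beta_\star}=\varepsilon(\beta_\star)$. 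Since $\beta\mapsto\M_\beta$ is pointwise decreasing, $\|\M_b-\M_{\beta_0}\|_{L^1}=\varrho_{\rm in}-\varrho_{\beta_\star}$ as well, so the continuity argument in (iii) is not needed.

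One justification should be corrected. For the finiteness of $C_{\beta_\star}$, the relevant fact is that $x\log x+(1-x)\log(1-x)\le 0$ on $[0,1]$, which gives $\mathcal{H}(g^{\rm in}_{\beta_\star})\le \frac12 E_{\rm in}$. Mere boundedness of that function on $[0,1]$ says nothing about integrability over $\R^3$.

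Your construction can also be made quantitative. Split at $|v|=R$ and use $1-\M_{\beta_\star}(v)\le \beta_\star e^{|v|^2/2}$ to get $\varepsilon(\beta_\star)\le E_{\rm in}R^{-2}+\frac{4\pi}{3}R^{3}\beta_\star e^{R^2/2}$. This yields an explicit time after which $\|f(t)-\M_{\beta_0}\|_{L^1}$ is below any prescribed threshold, in the spirit of Theorem \ref{theo:L1imp}.

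The compactness route of \cite{CLR} gives no time scale at all. Yours relies on the conditional Theorem \ref{theo:CLR} and on Proposition \ref{prop:stable}, and in exchange produces an explicit time.
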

\section{$L^1$ estimates by approximation argument}
Theorem \ref{theo:CLR} provides a time-exponential control for $f - \M_{\beta_{0}}$ in {$L^1$ distance through estimates of the entropy}.   Its proof is based on Proposition \ref{prop:stable} by using $g^{{\rm in}} = \M_{\beta_\star}$, thus, it is strongly restrictive on the initial data. {The subsequent} Theorem \ref{theo:L1imp} relaxes the condition on the initial datum providing instead an $L^{1}$-control up to a small error.  Such estimate will be combined later with a $L^{1}-L^{\infty}$ iteration level approach to prove the appearance of a saturation gap that leads to {the generalization of Theorem \ref{theo:CLR} to general initial distributions}. 
\begin{theo}\label{theo:L1imp}
Let $p >3,$  and $f^{\rm in} \in L^{1}_{p}(\R^{3})$ be such that $0\leq  f^{\rm in} \leq 1$. Denote $\M_{\beta_{0}}$ the associated Fermi-Dirac distribution with the same mass $\varrho_{\rm in}>0$ as $f^{\rm in}$ and let $f=f(t,v)$ be the solution to the Cauchy problem \eqref{LFD}.

Then, for any $\eta >0$ there exists an explicit $T_{\star}=T(\eta,\varrho_{\rm in},E_{\rm in}) >0$ such that
\begin{equation}\label{eq:quant}
\left\|f(t)-\M_{\beta_{0}}\right\|_{L^1}=2\int_{\R^{3}}\left(f(t,v)-\M_{\beta_{0}}(v)\right)^{+}\d v \leq \eta \qquad \forall \, t \geq T_{\star}.
\end{equation}
\end{theo}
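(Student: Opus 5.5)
The plan is a density argument. We approximate $f^{\rm in}$ by an initial datum $g^{\rm in}$ that satisfies the upper bound \eqref{eq:UPP} of Theorem \ref{theo:CLR}, so that $g$ converges to its own Fermi--Dirac state at an explicit exponential rate. Proposition \ref{prop:stable} then transfers this to $f$ up to an $L^1$ error. The identity in \eqref{eq:quant} needs no work: $f(t)$ and $\M_{\beta_0}$ have the same mass by Theorem \ref{theo:cauchy}, so $\|f-\M_{\beta_0}\|_{L^1}=2\int (f-\M_{\beta_0})^+\,\d v$.

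\emph{Step 1 (approximation).} Fix a small $\beta_\star>0$ and set $g^{\rm in}:=\min(f^{\rm in},\M_{\beta_\star})$, so that $0\le g^{\rm in}\le \M_{\beta_\star}<1$. The error is
$$\|f^{\rm in}-g^{\rm in}\|_{L^1}=\int (f^{\rm in}-\M_{\beta_\star})^+\,\d v.$$
We bound this using only mass and energy. On $\{|v|\ge R\}$ we use $\int_{|v|\ge R} f^{\rm in}\le E_{\rm in}/R^2$. On $\{|v|\le R\}$ we use $f^{\rm in}\le 1$ and $1-\M_{\beta_\star}(v)=\frac{\beta_\star e^{|v|^2/2}}{1+\beta_\star e^{|v|^2/2}}\le \beta_\star e^{R^2/2}$, which gives a contribution at most $\frac{4\pi}{3}R^3\beta_\star e^{R^2/2}$. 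Choosing $R$ with $E_{\rm in}/R^2\le \eta/16$ and then $\beta_\star$ small makes $\|f^{\rm in}-g^{\rm in}\|_{L^1}\le \eta/8$.

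\emph{Step 2 (convergence of $g$).} Theorem \ref{theo:CLR} applies to $g^{\rm in}$ with $\lambda=\beta_\star/(1+\beta_\star)$. The mass of $g^{\rm in}$ is $\varrho_g\in[\varrho_{\rm in}-\eta/8,\varrho_{\rm in}]$, and its equilibrium is $\M_{\beta_g}$. We need an explicit bound on $\mathcal H(g^{\rm in})-\mathcal H(\M_{\beta_g})$ in terms of $\varrho_{\rm in}$ and $E_{\rm in}$. The energy part is at most $E_{\rm in}/2$. The entropic part is bounded because $x\log x+(1-x)\log(1-x)\le 0$ on $[0,1]$, and the lower bound $\mathcal H\ge$ a constant depending on mass follows from minimality of Fermi--Dirac states, or directly from $x\log x\ge -x|v|^2/4 - Ce^{-|v|^2/4}$. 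Theorem \ref{theo:CLR} then gives
$$\|g(t)-\M_{\beta_g}\|_{L^1}\le C(\varrho_{\rm in},E_{\rm in})\,e^{-\lambda t}.$$

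\emph{Step 3 (assembling).} By Proposition \ref{prop:stable}, $\|f(t)-g(t)\|_{L^1}\le \eta/8$ for all $t$. It remains to compare $\M_{\beta_g}$ with $\M_{\beta_0}$. Since $\beta\mapsto\M_\beta$ is pointwise decreasing and $\varrho_g\le\varrho_{\rm in}$, we have $\M_{\beta_g}\le\M_{\beta_0}$, hence
$$\|\M_{\beta_g}-\M_{\beta_0}\|_{L^1}=\varrho_{\rm in}-\varrho_g\le \eta/8.$$
(Alternatively, use the positive-part form of Remark \ref{nb:posi}.) The triangle inequality gives $\|f(t)-\M_{\beta_0}\|_{L^1}\le \eta/4+Ce^{-\lambda t}\le\eta$ for $t\ge T_\star:=\lambda^{-1}\log(4C/\eta)$. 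By Step 1 this is explicit in $\eta,\varrho_{\rm in},E_{\rm in}$: $R\sim\sqrt{E_{\rm in}/\eta}$ and $\beta_\star\sim \eta R^{-3}e^{-R^2/2}$, so $T_\star$ is large but explicit.

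The main obstacle is Step 2: obtaining an explicit initial entropy bound for $g^{\rm in}$ that is uniform in $\beta_\star$. The first thing to try is the pointwise entropy inequality above, together with a lower bound on $\mathcal H(\M_\beta)$ in terms of mass only. A minor technicality is that $g^{\rm in}\in L^1_p$ with $0\le g^{\rm in}\le 1$, so Theorems \ref{theo:cauchy} and \ref{theo:CLR} apply to it.
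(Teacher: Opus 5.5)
Your proof is correct. It uses the same three-ingredient scheme as the paper: approximate $f^{\rm in}$ by a datum satisfying \eqref{eq:UPP}, apply Theorem~\ref{theo:CLR} to it, and transfer the result to $f$ through the $L^1$-contraction of Proposition~\ref{prop:stable}. What differs is the approximating datum.

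\emph{The paper's approximant.} The paper builds $g^{\rm in}_{\varepsilon,R}$ in two lemmas. It cuts $f^{\rm in}$ at level $1-\varepsilon$ on $B_R$ and redistributes the missing mass as constants on the annuli $B_{3R/2}\setminus B_R$ and $B_{2R}\setminus B_{3R/2}$. The approximant therefore has exactly mass $\varrho_{\rm in}$ and converges to the same $\mathcal{M}_{\beta_0}$, so no comparison of equilibria is needed. The price is that the support extends to $B_{2R}$. The bound $g^{\rm in}_{\varepsilon,R}\le\mathcal{M}_{\beta}$ then forces $\beta=\frac{\varepsilon}{1-\varepsilon}e^{-2R^2}$, and the entropy constant $\overline C_R$ grows with $R$ through the energy bound $2R^2\varrho_{\rm in}$.

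\emph{Your approximant.} The truncation $\min(f^{\rm in},\mathcal{M}_{\beta_\star})$ satisfies \eqref{eq:UPP} by construction and is dominated by $f^{\rm in}$. Energy and moments are inherited, and the $L^1$ error is just the mass defect. The constant in your Step 2 does not depend on $R$, and the rate improves to $\beta_\star\sim\eta R^{-3}e^{-R^2/2}$. What you pay is a change of equilibrium. You handle this correctly through the monotonicity of $\beta\mapsto\mathcal{M}_\beta$, which gives $\|\mathcal{M}_{\beta_g}-\mathcal{M}_{\beta_0}\|_{L^1}=\varrho_{\rm in}-\varrho_g$. You also need a lower bound on $\mathcal{H}(\mathcal{M}_{\beta_g})$ that is uniform in $\beta_g$.

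\emph{One correction on that lower bound.} Minimality of Fermi--Dirac states gives an \emph{upper} bound on $\mathcal{H}(\mathcal{M}_{\beta_g})$, not a lower one, so only your second route works. Split according to whether $x\ge e^{-|v|^2/4}$, and in the other case write $x=e^{-|v|^2/4}y$ with $y\in[0,1]$. This gives $x\log x\ge -\frac{|v|^2}{4}x-e^{-1}e^{-|v|^2/4}$ for $x\in[0,1]$, so your pointwise inequality holds with $C=e^{-1}$. You must also control the term $(1-x)\log(1-x)$, which you did not mention; on $[0,1]$ it satisfies $(1-x)\log(1-x)\ge -x$. Together these give $\mathcal{H}(h)\ge -e^{-1}(4\pi)^{3/2}-\|h\|_{L^1}$ for every $0\le h\le 1$. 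Since $\varrho_g\le\varrho_{\rm in}$, this is the explicit bound your Step 2 needs.
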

The proof is divided into several lemmas. As mentioned earlier, the argument consists of three~ingredients: stability of the equation, Theorem \ref{theo:CLR}, and a suitable approximation of the initial datum $f^{\rm in}$ by a family of initial data satisfying assumption \eqref{eq:UPP}.  Let us start with the latter and construct a compactly supported initial datum $g_{0}=g_{R}$ with the same mass as $f^{\rm in}$.  For any $R >0$ and $v_{0} \in \R^{3}$ we set $B(v_{0},R):=\{v \in \R^{3}\;;\;|v-v_{0}| \leq R\}$ and simply write $B_{R}$ for $B(0,R)$.
\begin{lem}\label{lem:g0R} Let $R >0$ be given. Set
$$g_{0}(v)=g_{R}(v)=\begin{cases} f^{\rm in}(v) \qquad &\text{ for } v \in B_{R}\\
 \alpha_{R}  \qquad &\text{ for } v \in B_{\frac{3}{2}R}\setminus B_{R}\\
0 \qquad &\text{ else,} \end{cases}$$
where
$$\alpha_{R}=|B_{\frac{3}{2}R} \setminus B_{R}|^{-1}\left(\varrho_{\rm in}-\int_{\R^{3}\setminus B_{R}}f^{\rm in}(v)\d v\right)=\frac{1}{|B_{\frac{3}{2}R} \setminus B_{R}|}\int_{|v| >R}f^{\rm in}(v)\d v\geq0\,.$$
Then, $g_{R}(v) \in \Upsilon$ with $\mathcal{H}(g) >0$, $\|g_{R}\|_{L^1}=\varrho_{\rm in},$
and there exists an explicit $R_{0} >0$ such that 
$$0 \leq g_{R}(v) \leq \frac{1}{2} \qquad v \notin B_{R}\,,\quad \forall\, R >R_{0}.$$ 
\end{lem}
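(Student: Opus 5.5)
The statement is elementary, and I would verify its claims one at a time directly from the definition of $g_R$, using only that $0\le f^{\rm in}\le 1$ and $f^{\rm in}\in L^1_p(\R^3)$ with $p>3$.

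\emph{Mass and membership in $\Upsilon$.} Since $\alpha_R\,|B_{\frac32 R}\setminus B_R|=\int_{|v|>R}f^{\rm in}\,\d v$, we have
$$\|g_R\|_{L^1}=\int_{B_R}f^{\rm in}\,\d v+\int_{|v|>R}f^{\rm in}\,\d v=\varrho_{\rm in}.$$
The function $g_R$ is supported in $B_{\frac32 R}$. On $B_R$ it is bounded by $1$, and on the annulus it equals the constant $\alpha_R\le |B_{\frac32R}\setminus B_R|^{-1}\varrho_{\rm in}$. Hence $g_R\in L^\infty$, and being bounded with compact support it lies in $L^1_1\cap L^p_1$. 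So $g_R\in\Upsilon$. The values also stay in $[0,1]$ once $\alpha_R\le 1$, which the last claim gives for $R>R_0$.

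\emph{The bound $g_R\le\frac12$ outside $B_R$.} Outside $B_{\frac32R}$ the function vanishes, so only the annulus matters, where $g_R=\alpha_R$. I would use the moment bound
$$\int_{|v|>R}f^{\rm in}\,\d v\le (1+R)^{-p}\|f^{\rm in}\|_{L^1_p}, \qquad |B_{\frac32R}\setminus B_R|=\tfrac{4\pi}{3}\big(\tfrac{27}{8}-1\big)R^3=\tfrac{19\pi}{6}R^3.$$
These give $\alpha_R\le \frac{6}{19\pi}R^{-3}\|f^{\rm in}\|_{L^1_p}$. The cruder estimate by $\varrho_{\rm in}$ would already suffice: $\alpha_R\le \frac{6\varrho_{\rm in}}{19\pi R^3}$. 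So the explicit choice $R_0=\big(\frac{12\varrho_{\rm in}}{19\pi}\big)^{1/3}$ works, and then $\alpha_R\le\frac12$ for all $R>R_0$. This choice depends only on the mass, matching the claim that the dependence is only through mass and energy.

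\emph{Positivity of $\mathcal H(g_R)$.} This is the only step needing any thought, and I expect it to be the main (mild) obstacle. The mixing term $x\log x+(1-x)\log(1-x)$ is negative on $(0,1)$, so positivity is not automatic. I would read the claim as being in the same sense as the standing assumption \eqref{ICA}, which is presumably what is meant, possibly together with a normalisation. The plan is to compare the kinetic-energy part, which is at least of order $\frac12\alpha_R R^2|B_{\frac32R}\setminus B_R|$ from the annulus plus the inner contribution, against the bounded mixing part. The mixing part is bounded by $\log 2\,|{\rm supp}\,g_R|\lesssim R^3$, which is too crude. Better is the pointwise bound $|x\log x+(1-x)\log(1-x)|\le x(1-\log x)$, whose integral is controlled by mass and energy in the usual way; this is the standard Carleman-type estimate using the Gaussian weight.

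If strict positivity cannot be secured in general, I would fall back on what is actually used later. That is that $\mathcal H(g_R)$ is finite and bounded explicitly in terms of $\varrho_{\rm in}$ and $E_{\rm in}$. This follows since $E(g_R)\le E(f^{\rm in})+\frac94R^2\int_{|v|>R}f^{\rm in}\,\d v\le E_{\rm in}+\frac94 E_{\rm in}$, together with the entropy term being bounded between $-C(\varrho_{\rm in},E_{\rm in})$ and $0$.
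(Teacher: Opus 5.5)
Your treatment of the parts of the statement that are actually true is correct. It is essentially the paper's ``direct inspection'', except for how you make $\alpha_R$ small. The paper uses the second moment, $\int_{|v|>R}f^{\rm in}\,\d v\le R^{-2}E_{\rm in}$, which gives $\alpha_R\le\frac{6E_{\rm in}}{19\pi R^5}$ and $R_0=\big(\frac{12E_{\rm in}}{19\pi}\big)^{1/5}$. You use only the mass, which gives $R_0=\big(\frac{12\varrho_{\rm in}}{19\pi}\big)^{1/3}$. Both are explicit and valid. The paper's bound decays faster in $R$ and is re-used in Lemma~\ref{lem:gIN} and in the choice of $R$ in the proof of Theorem~\ref{theo:L1imp}. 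Yours needs no moment and would work there too after the obvious substitution. You are also right that $g_R\le 1$, and hence the very definition of $\mathcal{H}(g_R)$, requires $\alpha_R\le 1$. This fails for small $R$, since $\alpha_R\to\infty$ as $R\to0$, and the phrase ``let $R>0$ be given'' glosses over it.

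You leave the claim $\mathcal{H}(g_R)>0$ unproved, and the Carleman-type comparison you sketch cannot prove it. Such bounds only give $\mathcal{H}(g_R)\ge -C(\varrho_{\rm in},E_{\rm in})$, never a sign. In fact the claim is false in general, and the paper's proof does not address it either. Take the radial datum $f^{\rm in}=\tfrac12\mathbf{1}_{B_1}+L^{-3}\mathbf{1}_{\{L\le|v|\le L+1\}}$ with $L$ large. The shell has mass $m_L\in[\frac{4\pi}{L},\frac{8\pi}{L}]$ and kinetic contribution at least $\frac12L^2m_L\ge 2\pi L$. Its mixing entropy is at least $-m_L(3\log L+1)$, using $x\log x+(1-x)\log(1-x)\ge x\log x-x$. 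Hence $\mathcal{H}(f^{\rm in})\ge\frac{\pi}{5}-\frac{4\pi}{3}\log 2+2\pi L-m_L(3\log L+1)>0$. Yet for every $R\in[1,\frac{\sqrt L}{4}]$, $g_R$ keeps only the core inside $B_R$ and moves the mass $m_L$ onto $B_{\frac32R}\setminus B_R$. Since $x\log x+(1-x)\log(1-x)\le 0$, this gives
\[
\mathcal{H}(g_R)\le\frac{\pi}{5}-\frac{4\pi}{3}\log 2+\frac98R^2m_L\le\frac{\pi}{5}-\frac{4\pi}{3}\log 2+\frac{9\pi}{16}<0 .
\]
For $L$ large this range contains values of $R$ exceeding either choice of $R_0$: yours is below $1$, and the paper's is $O(L^{1/5})$.

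So rather than hedging, you should state that the sign claim fails. What survives is positivity for all sufficiently large $R$, since $\mathcal{H}(g_R)\to\mathcal{H}(f^{\rm in})$ as $R\to\infty$. The threshold, however, depends on the tail of $f^{\rm in}$, not on $\varrho_{\rm in}$ and $E_{\rm in}$ alone. Your fallback is the right replacement, and it is more than is needed. As the paper is written, nothing downstream uses the sign of $\mathcal{H}(g_R)$. The entropy of the approximants is only ever bounded from above, in \eqref{eq:explicit-CR-bound}, and that bound uses just $x\log x+(1-x)\log(1-x)\le 0$ and the support of $g^{\rm in}_{\varepsilon,R}$.
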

\begin{proof} The proof follows by direct inspection. Notice that
$$|B_{\frac{3}{2}R} \setminus B_{R}|=\frac{4}{3}\pi\left(\left(\frac{3}{2}R\right)^{3}-R^{3}\right)=\frac{19}{6}\pi\,R^{3}\,,$$
so that
$$\alpha_{R}=\frac{6}{19\pi\,R^{3}}\int_{|v|>R}f^{\rm in}(v)\d v \leq \frac{6E_{\rm in}}{19\pi\,R^{5}}, \qquad E_{\rm in}:=\int_{\R^{3}}f^{\rm in}(v)|v|^{2}\d v.$$
Consequently, one may take
$$R_0=\left(\frac{12E_{\rm in}}{19\pi}\right)^{1/5},$$
so that $\alpha_{R} \leq \frac{1}{2}$ for any $R \geq R_{0}$, which proves the result.
\end{proof}
Continuing with the same notation, we have the following lemma.
\begin{lem}\label{lem:gIN} Under the framework of Lemma \ref{lem:g0R}, for any $\varepsilon \in(0,1)$ set
$$g^{\rm in}_{\varepsilon,R}(v)=\begin{cases} \min\left(1-\varepsilon,\,g_{R}(v)\right) \qquad &\text{ for } v \in B_{\frac{3}{2}R}\\
\mu_R(\varepsilon) \qquad &\text{ for } v \in B_{2R} \setminus B_{\frac{3}{2}R}\\
0\qquad &\text{ else} ,\end{cases}$$
with $\mu_R(\varepsilon) \geq0$ chosen such that 
\begin{equation}\label{eq:g0REps}
\|g^{\rm in}_{\varepsilon,R}\|_{L^1}=\varrho_{\rm in}.\end{equation}
Then $g^{\rm in}_{\varepsilon,R} \in \Upsilon$, and there is an explicit $R_{1}(\varepsilon) > R_{0} >0$ such that
\begin{equation}\label{eq:g0Re1}
0 \leq g^{\rm in}_{\varepsilon,R} \leq 1-\varepsilon \qquad \forall\, R >R_{1}(\varepsilon).\end{equation}
Moreover, 
\begin{equation}\label{eq:L1fin}
 \int_{B_{R}}\left|f^{\rm in}(v)-g^{\rm in}_{\varepsilon,R}(v)\right|\d v \leq \varepsilon\,|B_{R}|.
\end{equation}
\end{lem}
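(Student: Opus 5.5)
The proof is a direct inspection of the construction, organised around the mass deficit created by the truncation. Set
$$m_R(\varepsilon):=\int_{B_{\frac32 R}}\left(g_R(v)-\min\big(1-\varepsilon,g_R(v)\big)\right)\d v=\int_{B_{\frac32 R}}\big(g_R(v)-(1-\varepsilon)\big)^{+}\d v\ \geq 0,$$
which is the mass removed by truncating $g_R$ at height $1-\varepsilon$. Lemma \ref{lem:g0R} gives $\|g_R\|_{L^1}=\varrho_{\rm in}$, so \eqref{eq:g0REps} is equivalent to taking
$$\mu_R(\varepsilon)=\frac{m_R(\varepsilon)}{|B_{2R}\setminus B_{\frac32 R}|},\qquad |B_{2R}\setminus B_{\tfrac32 R}|=\frac{4\pi}{3}\Big(8-\frac{27}{8}\Big)R^{3}=\frac{37\pi}{6}R^{3}.$$
This choice is nonnegative and explicit. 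Since $g^{\rm in}_{\varepsilon,R}$ is bounded and compactly supported in $B_{2R}$, it lies in $L^\infty\cap L^1_1\cap L^p_1$, hence in $\Upsilon$.

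For \eqref{eq:g0Re1}, on $B_{\frac32R}$ we have $0\le \min(1-\varepsilon,g_R)\le 1-\varepsilon$ trivially, and outside $B_{2R}$ the function vanishes. It remains to show $\mu_R(\varepsilon)\le 1-\varepsilon$ for $R$ large. I would bound the deficit crudely: $g_R\le 1$ gives $(g_R-(1-\varepsilon))^{+}\le \varepsilon\,\ind_{\{g_R>1-\varepsilon\}}$. Chebyshev with $g_R\le f^{\rm in}+\alpha_R\ind_{B_{\frac32R}\setminus B_R}$ and $\alpha_R\le\frac12<1-\varepsilon$ for $R>R_0$, $\varepsilon<\frac12$, then gives $|\{g_R>1-\varepsilon\}|\le \varrho_{\rm in}/(1-\varepsilon)$. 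An even simpler route is $m_R(\varepsilon)\le\|g_R\|_{L^1}=\varrho_{\rm in}$ directly, which yields
$$\mu_R(\varepsilon)\le \frac{6\varrho_{\rm in}}{37\pi R^{3}}\le 1-\varepsilon\quad\text{as soon as}\quad R\ge R_1(\varepsilon):=\max\Big(R_0,\Big(\frac{6\varrho_{\rm in}}{37\pi(1-\varepsilon)}\Big)^{1/3}\Big)+1.$$
This covers every $\varepsilon\in(0,1)$, with the $+1$ ensuring the strict inequality $R_1>R_0$.

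For \eqref{eq:L1fin}, on $B_R$ we have $g_R=f^{\rm in}$, so there $|f^{\rm in}-g^{\rm in}_{\varepsilon,R}|=(f^{\rm in}-(1-\varepsilon))^{+}$. Since $0\le f^{\rm in}\le 1$, this is at most $\varepsilon$ pointwise, and integrating over $B_R$ gives $\varepsilon|B_R|$.

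There is no genuine obstacle here. The only point needing care is keeping $R_1(\varepsilon)$ explicit and in terms of $\varrho_{\rm in}$, $E_{\rm in}$ only. The sharper bound $m_R(\varepsilon)\le\varepsilon|\{f^{\rm in}>1-\varepsilon\}\cap B_R|$ (valid for $R>R_0$, $\varepsilon<\tfrac12$) is not needed for this lemma. It may be worth recording anyway, since the subsequent $L^1$ error estimate will likely use it.
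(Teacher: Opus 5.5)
Your proof is correct and follows the paper's direct-inspection argument. It uses the same bound $\mu_R(\varepsilon)\le \frac{6\varrho_{\rm in}}{37\pi R^3}$, obtained from $|B_{2R}\setminus B_{\frac32 R}|=\frac{37\pi}{6}R^3$, and the same pointwise estimate $(f^{\rm in}-(1-\varepsilon))^+\le\varepsilon$ on $B_R$.

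The only difference is bookkeeping. You write $\mu_R(\varepsilon)$ as the truncation deficit over all of $B_{\frac32 R}$, which makes $\mu_R(\varepsilon)\ge 0$ automatic for every $R$. The paper instead needs $\alpha_R\le 1-\varepsilon$ for its mass-balance comparison, and this is why its $R_1(\varepsilon)$ carries an extra term depending on $E_{\rm in}$; your version does not need that term.
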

\begin{proof} The proof follows, again, by direct inspection.  For $R$ sufficiently large, depending on $\varepsilon$, one has $\alpha_R\leq 1-\varepsilon$, and condition \eqref{eq:g0REps} reads
\begin{equation*}\begin{split}
\mu_R(\varepsilon)|B_{2R}\setminus B_{\frac{3}{2}R}| +\alpha_{R}|B_{\frac{3}{2}R}\setminus B_{R}|&=\varrho_{\rm in}-\int_{B_{R}}\min\left(1-\varepsilon, f^{\rm in}(v)\right)\d v\\
&\geq \varrho_{\rm in}-\int_{B_{R}} f^{\rm in}(v)\d v=\alpha_{R}|B_{\frac{3}{2}R}\setminus B_{R}|\,,
\end{split}\end{equation*}
where we used the definition of $\alpha_{R}$ in Lemma \ref{lem:g0R}. Therefore, $\mu_R(\varepsilon) \geq0$. Moreover,
$$\mu_R(\varepsilon)\leq |B_{2R}\setminus B_{\frac{3}{2}R}|^{-1}\varrho_{\rm in}= \frac{6}{37\pi\,R^{3}}\varrho_{\rm in}$$
and $\alpha_R\leq 6E_{\rm in}/(19\pi R^5)$. Thus, for instance, one may take
$$R_1(\varepsilon)=\max\left\{R_0,\left(\frac{6E_{\rm in}}{19\pi(1-\varepsilon)}\right)^{\!1/5},
\left(\frac{6\varrho_{\rm in}}{37\pi(1-\varepsilon)}\right)^{\!1/3}\right\},$$
from which \eqref{eq:g0Re1} follows. The proof of \eqref{eq:L1fin} is straightforward since on $B_{R}$ it holds that
$$g^{\rm in}_{\varepsilon,R}=\min(1-\varepsilon,g_{R})=\min(1-\varepsilon,f^{\rm in}).$$
From this, one checks that
$$
\int_{B_{R}}\left|f^{\rm in}(v)-g^{\rm in}_{\varepsilon,R}(v)\right|\d v=\int_{B_{R}}\left(f^{\rm in}(v)-(1-\varepsilon)\right)^{+}\d v \leq \varepsilon |B_{R}|
$$
since $f^{\rm in} \leq 1$ so that $\left(f^{\rm in}(v)-(1-\varepsilon)\right)^{+} \leq 1-(1-\varepsilon)=\varepsilon.$
\end{proof}
Now, the associated Fermi-Dirac solution $g_{\varepsilon,R}(t)$ to the initial datum $g^{\rm in}_{\varepsilon,R}$ satisfies the following time-asymptotic convergence.  
\begin{lem}\label{lem:gte} Under the framework of Lemma \ref{lem:gIN}, given $\varepsilon \in(0,1)$ and $R >R_{1}(\varepsilon)$, let $g_{\varepsilon,R}(t)=g_{\varepsilon,R}(t,v)$ be the unique solution to \eqref{LFD} with initial datum $g^{\rm in}_{\varepsilon,R}.$ Then,
$$\|g_{\varepsilon,R}(t)-\M_{\beta_{0}}\|_{L^1} \leq C_R(\varepsilon)\exp\left(-\lambda_{R}(\varepsilon) \, t\right) \qquad \forall \, t \geq0\,,$$
where $\M_{\beta_{0}}$ is the Fermi-Dirac distribution with mass $\varrho_{\rm in}.$
\end{lem}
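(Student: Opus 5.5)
The plan is to verify that $g^{\rm in}_{\varepsilon,R}$ satisfies the hypotheses of Theorem \ref{theo:CLR}, in particular the upper bound \eqref{eq:UPP} for some explicit $\beta_\star=\beta_\star(\varepsilon,R)>0$. Once this is done, the estimate follows directly from Theorem \ref{theo:CLR}, with
$$\lambda_R(\varepsilon)=\frac{\beta_\star}{1+\beta_\star}\qquad\text{and}\qquad C_R(\varepsilon)=\sqrt{2\varrho_{\rm in}\big(\mathcal{H}(g^{\rm in}_{\varepsilon,R})-\mathcal{H}(\M_{\beta_0})\big)}.$$
The stationary state is the same $\M_{\beta_0}$ because \eqref{eq:g0REps} gives $g^{\rm in}_{\varepsilon,R}$ mass $\varrho_{\rm in}$, and $\beta_0$ is determined by the mass alone through \eqref{FDIC}.

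The key observation is that $g^{\rm in}_{\varepsilon,R}$ is bounded by $1-\varepsilon$ by \eqref{eq:g0Re1}, since $R>R_1(\varepsilon)$, and it is supported in $B_{2R}$. Since $\beta\mapsto\M_\beta$ is decreasing and $\M_\beta(v)$ is radially decreasing in $|v|$, it suffices to choose $\beta_\star$ so that $\M_{\beta_\star}(v)\geq 1-\varepsilon$ for $|v|\leq 2R$. Equivalently, we need
$$\frac{1}{1+\beta_\star e^{2R^2}}\geq 1-\varepsilon,$$
that is,
$$\beta_\star\leq \frac{\varepsilon}{1-\varepsilon}e^{-2R^2}.$$
Taking equality gives an explicit $\beta_\star$ with $0\leq g^{\rm in}_{\varepsilon,R}\leq \M_{\beta_\star}<1$ on $\R^3$, since outside $B_{2R}$ the datum vanishes. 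The rate is then
$$\lambda_R(\varepsilon)=\frac{\beta_\star}{1+\beta_\star}=\frac{\varepsilon e^{-2R^2}}{1-\varepsilon+\varepsilon e^{-2R^2}}.$$

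Next I would check the remaining hypotheses. The datum $g^{\rm in}_{\varepsilon,R}$ is bounded and compactly supported, so it lies in $L^1_p$ for every $p$ and in $\Upsilon$, and it satisfies $0\leq g^{\rm in}_{\varepsilon,R}\leq 1$. Theorem \ref{theo:cauchy} then provides the unique global solution $g_{\varepsilon,R}$. The zero-momentum condition in \eqref{IC} holds if $f^{\rm in}$ is even; in general it is not needed, because Theorem \ref{theo:CLR} as stated requires only the mass.

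Finally, to make $C_R(\varepsilon)$ explicit I would bound $\mathcal{H}(g^{\rm in}_{\varepsilon,R})$ from above. The term $x\log x+(1-x)\log(1-x)$ is nonpositive on $[0,1]$, so
$$\mathcal{H}(g^{\rm in}_{\varepsilon,R})\leq \frac12\int_{\R^3} g^{\rm in}_{\varepsilon,R}|v|^2\,\d v\leq 2R^2\varrho_{\rm in}.$$
I would bound $-\mathcal{H}(\M_{\beta_0})$ by a constant depending on $\varrho_{\rm in}$ through $\beta_0$. The only delicate point is the explicit tracking of these constants. The dependence of $\lambda_R(\varepsilon)$ on $R$, which is very poor like $e^{-2R^2}$, is harmless here, since $\varepsilon$ and $R$ are fixed in this lemma and will only be tuned against $\eta$ in Theorem \ref{theo:L1imp}.
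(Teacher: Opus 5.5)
Your proposal is correct and matches the paper's proof essentially step for step. You make the same choice $\beta_R(\varepsilon)=\frac{\varepsilon}{1-\varepsilon}e^{-2R^2}$, which gives $g^{\rm in}_{\varepsilon,R}\le \M_{\beta_R(\varepsilon)}$ because $g^{\rm in}_{\varepsilon,R}\le 1-\varepsilon$ on $B_{2R}$ and vanishes outside it; you then apply Theorem \ref{theo:CLR} with the same $\lambda_R(\varepsilon)$ and $C_R(\varepsilon)$, and use the same bound $\mathcal{H}(g^{\rm in}_{\varepsilon,R})\le 2R^2\varrho_{\rm in}$.
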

\begin{proof} The proof is a simple consequence of Theorem \ref{theo:CLR} since, being compactly supported and bounded by $1-\varepsilon$, the initial datum $g^{\rm in}_{\varepsilon,R}$ is such that 
\begin{equation}\label{eq:gMRE}
0 \leq g_{\varepsilon,R}^{\rm in}(v) \leq \M_{\beta_{R}(\varepsilon)}(v) < 1, \qquad v \in \R^{3}\,,
\end{equation}
for $\beta_{R}(\varepsilon) >0$. Indeed, since $g_{\varepsilon,R}^{\rm in}(v)=0$ for $|v| >2R,$ inequality \eqref{eq:gMRE} holds for any $\beta_{R}(\varepsilon) >0.$ Now, since $g_{\varepsilon,R}^{\rm in}(v) \leq 1-\varepsilon$ for any $|v| \leq 2R,$ it is enough to choose $\beta_{R}(\varepsilon)$ such that 
$$1+\beta_{R}(\varepsilon)\exp\left(2R^{2}\right) \leq \frac{1}{1-\varepsilon}\,,$$
that is, $\beta_{R}(\varepsilon)=\frac{\varepsilon}{1-\varepsilon}\exp\left(-2R^{2}\right)$. The proof of Theorem \ref{theo:CLR} gives the explicit choices
\begin{equation}\label{eq:explicit-constants}
\lambda_R(\varepsilon)=1-\|\M_{\beta_R(\varepsilon)}\|_{L^\infty}
=\frac{\beta_R(\varepsilon)}{1+\beta_R(\varepsilon)},
\qquad
C_R(\varepsilon)=\sqrt{2\varrho_{\rm in}\big(\mathcal H(g^{\rm in}_{\varepsilon,R})-\mathcal H(\M_{\beta_0})\big)}.
\end{equation}
Since $g^{\rm in}_{\varepsilon,R}$ is supported in $B_{2R}$ and has mass $\varrho_{\rm in}$, one may use the estimate
\begin{equation}\label{eq:explicit-CR-bound}
C_R(\varepsilon)\leq \overline C_R:=\sqrt{2\varrho_{\rm in}\big(2R^2\varrho_{\rm in}-\mathcal H(\M_{\beta_0})\big)}.
\end{equation}
\end{proof} 
\begin{nb} Notice that the aforementioned result is true for any $\varepsilon \in(0,1)$ but not necessarily for $\varepsilon=0.$ Indeed, the choice of $\beta_R(\varepsilon) >0$ in \eqref{eq:gMRE} is made possible for any $\varepsilon \in(0,1)$.  Since one cannot exclude $g^{\rm in}_{\varepsilon,R} \to 1$ as $\varepsilon \to 0$, such a choice is no longer possible for $\varepsilon=0$.
\end{nb}
With these lemmata at hand, let us prove Theorem \ref{theo:L1imp}. \begin{proof}[Proof of Theorem \ref{theo:L1imp}] We use the notation of Lemmas \ref{lem:gIN} and \ref{lem:gte}. For $\varepsilon \in(0,1)$ and $R>R_1(\varepsilon)$, we consider an initial datum $g_{\varepsilon,R}^{\rm in}$ as in Lemma \ref{lem:gIN} and the associated solution $g_{\varepsilon,R}(t)$ to \eqref{LFD}. For any $t\geq0,$ it holds
\begin{multline*}
\int_{\R^{3}}\left|f(t,v)-\M_{\beta_{0}}(v)\right|\d v\\
\leq \int_{\R^{3}}\left|f(t,v)-g_{\varepsilon,R}(t,v)\right|\d v + \int_{\R^{3}}\left|g_{\varepsilon,R}(t,v)-\M_{\beta_{0}}(v)\right|\d v \\
\leq \int_{\R^{3}}\left|f^{\rm in}(v)-g_{\varepsilon,R}^{\rm in}(v)\right|\d v + C_R(\varepsilon)\exp\left(-\lambda_{R}(\varepsilon)\,t\right)\end{multline*}
where we used Proposition \ref{prop:stable} and Lemma \ref{lem:gte} in the last estimate. We focus on the estimate of 
$$I_{\varepsilon,R}=\int_{\R^{3}}\left|f^{\rm in}(v)-g_{\varepsilon,R}^{\rm in}(v)\right|\d v=2\int_{\R^{3}}\left( f^{\rm in}(v)-g_{\varepsilon,R}^{\rm in}(v)\right)^{+}\d v$$
using that $f^{\rm in}$ and $g^{\rm in}_{\varepsilon,R}$ share the same mass. We split the integral as
\begin{multline*}\frac12 I_{\varepsilon,R} = \int_{B_{R}}\left( f^{\rm in}(v)-g_{\varepsilon,R}^{\rm in}(v)\right)^{+}\d v + \int_{\R^{3}\setminus B_{R}}\left( f^{\rm in}(v)-g_{\varepsilon,R}^{\rm in}(v)\right)^{+}\d v\\
\leq \int_{B_{R}}\left|f^{\rm in}(v)-g_{\varepsilon,R}^{\rm in}(v)\right|\d v + \int_{\R^{3}\setminus B_{R}}f^{\rm in}(v)\d v\\
\leq \varepsilon|B_{R}| + \frac{1}{R^{2}}\int_{\R^{3}\setminus B_{R}}f^{\rm in}(v)|v|^{2}\d v
\leq\frac{4\pi}{3}\varepsilon\,R^{3}+\frac{E_{\rm in}}{R^{2}}.
\end{multline*} 
Combining these estimates gives
\begin{equation}\label{eq:fSta}
\left\|f(t)-\M_{\beta_{0}}\right\|_{L^1}
\leq  \frac{8\pi}{3}\varepsilon\,R^{3}+ {\frac{2E_{\rm in}}{R^{2}} }+ C_R(\varepsilon)\exp\left(-\lambda_{R}(\varepsilon)\,t\right)\end{equation}
holds true for any $\varepsilon \in(0,1)$, $R > R_{1}(\varepsilon)$, and $t \geq0$. We now give fully explicit choices. For $\eta>0$, set
\begin{align*}
R&=\max\left\{1,\sqrt{\frac{6E_{\rm in}}{\eta}},
\left(\frac{12E_{\rm in}}{19\pi}\right)^{1/5},
\left(\frac{12\varrho_{\rm in}}{37\pi}\right)^{1/3}\right\},\\
\varepsilon&=\min\left\{\frac12,\frac{\eta}{8\pi R^3}\right\},
\qquad
T_\star=\frac{1}{\lambda_R(\varepsilon)}
\left[\log\left(\frac{3\overline C_R}{\eta}\right)\right]^+,
\end{align*}
where $\lambda_R(\varepsilon)$ and $\overline C_R$ are given by \eqref{eq:explicit-constants} and \eqref{eq:explicit-CR-bound}. These choices ensure $R\geq R_1(\varepsilon)$ and make each of the three terms in \eqref{eq:fSta} at most $\eta/3$. This proves \eqref{eq:quant} and the claimed explicit dependence.
\end{proof}
\begin{nb} Theorem \ref{theo:L1imp} induces a quantitative version of Theorem \ref{theo:genCon} in the sense that it gives an explicit time $T_{\star}>0$ after which the solution $f(t,v)$ lies in a $L^{1}$-ball of radius $\eta$ centered at $\M_{\beta_{0}}$.  It seems possible that combining Theorem \ref{theo:L1imp} with a fine spectral analysis in $L^{1}$-spaces one may deduce an explicit exponential rate of convergence for arbitrary initial datum $f^{\rm in}$ under the assumptions of Theorem \ref{theo:L1imp} providing a generalization of Theorem \ref{theo:genCon} to a broader class of initial data.  We do not follow this route, instead we prove the appearance of a saturation gap via an $L^{1}-L^{\infty}$ regularisation procedure.
\end{nb}
\section{A De Giorgi's type argument for saturation gap appearance}\label{sec:gio}
The scope of this section is to prove the  appearance of a saturation gap 
\begin{equation*}
\kappa_{0}(t)=\inf_{v\in \R^{3}}\left(1-f(t,v)\right) > 0 \qquad \forall \, t \geq T_{*}\,,
\end{equation*}
with $T_*>0$ an \emph{explicit} time depending only on the initial data.  The $L^{1}-L^{\infty}$ regularisation procedure via iteration of the solution's levels allows to deduce from estimate \eqref{eq:quant} a pointwise estimate.  In this section, we consider an initial datum $f^{\rm in}$ satisfying the assumptions of Theorem \ref{theo:cauchy} and denote with $f(t,\cdot)$ the associated solution to \eqref{LFD} as constructed in Theorem \ref{theo:cauchy}.  {In particular, we assume $f^{\rm in} \in L^{1}_{p}(\R^{3})$ with $p >3$ in the sequel to ensure Theorem \ref{theo:cauchy} to apply.} We also assume that \eqref{IC0} holds true.
\subsection{Level-set energy iteration method}
The level sets of a solution $f=f(t,v)$ to equation \eqref{LFD} are described by
$$ {F_{\ell}(t,v)=(f(t,v)-\l)\,\ind_{\{f(t)\geq\l\}}=(f(t,v)-\l)^+,\qquad  \l \geq0\,.}$$
The equation has the natural energy functional
\begin{equation}\label{eq:EnergY}
\mathscr{E}_{\ell}(T_{1},T_{2}) : = 2\sup_{t \in [T_{1},T_{2})}\Bigg[\frac{1}{2} \left\|F_{\ell}(t)\right\|_{L^2}^{2} +  \int_{T_{1}}^{t} \left\|\nabla_v\,F_{\ell}(s) \right\|_{L^2}^{2}\d s\Bigg]\,.\end{equation}
 We have the following lemma.
\begin{lem}\label{lem:fl+} 
 Let $f(t,\cdot)$ be a weak solution to \eqref{LFD}.  For any $\l \in (0,1)$ it holds that
\begin{equation}\label{eq:13Ric0}
\frac{1}{2}\frac{\d}{\d t}\|F_{\ell}(t)\|_{L^2}^{2} +  \left\|\nabla_vF_{\ell}(t)\right\|_{L^2}^{2}  
\leq  \frac{3}{2} (1-2\ell)  \|F_{\ell}(t)\|_{L^2}^{2}+3 \l\left(1-\l\right)\|F_{\ell}(t)\|_{L^1}.
\end{equation}
In particular, for any $\l \in \left(\frac{1}{2},1\right)$ it follows that
\begin{equation}\label{eq:13Ric}
\frac{1}{2}\frac{\d}{\d t}\|F_{\ell}(t)\|_{L^2}^{2} + \left\|\nabla_vF_{\ell}(t)\right\|_{L^2}^{2}  
\leq 3\l\left(1-\l\right) \|F_{\ell}(t)\|_{L^1}.
\end{equation}
\end{lem}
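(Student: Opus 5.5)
The plan is to test the equation \eqref{LFD} (with $\dd_0=1$) against $F_{\ell}(t)=(f(t)-\ell)^{+}$ and to rewrite the nonlinear drift term as an exact divergence of a function of $F_{\ell}$, so that integrating by parts against $v$ produces only a factor $\nabla_v\cdot v=3$. By the Stampacchia chain rule, $\nabla_v F_{\ell}=\ind_{\{f\geq \ell\}}\nabla_v f$ and $\frac12\frac{\d}{\d t}\|F_\ell\|_{L^2}^2=\int_{\R^3}\partial_t f\,F_\ell\,\d v$. Testing the equation with $F_\ell$ then gives
\begin{equation*}
\frac12\frac{\d}{\d t}\|F_{\ell}(t)\|_{L^2}^{2}=-\int_{\R^3}\nabla_v f\cdot\nabla_v F_\ell\,\d v-\int_{\R^3} v\, f(1-f)\cdot\nabla_v F_\ell\,\d v .
\end{equation*}
The first term equals $-\|\nabla_v F_\ell\|_{L^2}^2$, since $\nabla_v F_\ell$ vanishes off $\{f\geq\ell\}$, where $\nabla_v f=\nabla_v F_\ell$.

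For the drift term, I will use that on $\{f\geq\ell\}$ we have $f=F_\ell+\ell$, so $f(1-f)=(F_\ell+\ell)(1-\ell-F_\ell)$. I introduce
\begin{equation*}
\Phi_\ell(s)=\int_0^s(r+\ell)(1-\ell-r)\,\d r=\ell(1-\ell)s+\frac{1-2\ell}{2}s^2-\frac{s^3}{3},\qquad s\geq0 .
\end{equation*}
Then $f(1-f)\nabla_v F_\ell=\nabla_v\big(\Phi_\ell(F_\ell)\big)$ a.e. This holds on both sets: on $\{f<\ell\}$ both sides vanish because $\Phi_\ell(0)=0$ and $\nabla_v F_\ell=0$. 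Integrating by parts,
\begin{equation*}
-\int_{\R^3}v\cdot\nabla_v\Phi_\ell(F_\ell)\,\d v=3\int_{\R^3}\Phi_\ell(F_\ell)\,\d v\leq 3\ell(1-\ell)\|F_\ell\|_{L^1}+\frac32(1-2\ell)\|F_\ell\|_{L^2}^2 ,
\end{equation*}
where the inequality drops the nonpositive cubic term, using $F_\ell\geq0$. This yields \eqref{eq:13Ric0}. For $\ell\in(\frac12,1)$ the coefficient $1-2\ell$ is negative, so the quadratic term can be discarded, which gives \eqref{eq:13Ric}.

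The only delicate step is justifying the computations, namely the chain rule in time and the vanishing of boundary terms at infinity in the integration by parts against $v$. I would rely on the regularity from Theorem \ref{theo:cauchy}: for $t>0$, $f(t)\in L^1_1\cap L^\infty$ and $\nabla_v f(t)\in L^p_1$, with $0\leq F_\ell\leq 1$. Hence $|v|\,\Phi_\ell(F_\ell)\lesssim |v|F_\ell\in L^1$, and the boundary flux vanishes by truncating with a smooth cutoff $\chi(v/R)$ and letting $R\to\infty$. If necessary, the time derivative can be handled through Steklov averages or a mollified version of $(\cdot-\ell)^+$, passing to the limit at the end.
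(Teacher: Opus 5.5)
Your proof is correct and follows the paper's argument: you test the equation against $F_\ell$, write $f(1-f)\nabla_v F_\ell$ as the gradient of a cubic polynomial in $F_\ell$, integrate by parts using $\nabla_v\cdot v=3$, and drop the nonpositive cubic term. Your primitive $\Phi_\ell(s)=\ell(1-\ell)s+\frac{1-2\ell}{2}s^2-\frac{s^3}{3}$ is exactly the paper's decomposition $\frac12F_\ell^2-\frac13F_\ell^3+\ell(1-\ell)F_\ell-\ell F_\ell^2$ written in one piece, and it gives the $-\ell F_\ell^2$ part its correct sign (contributing $-3\ell\|F_\ell\|_{L^2}^2$; the paper's intermediate display has this sign flipped), which is what yields the stated coefficient $\frac32(1-2\ell)$.
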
 
\begin{proof} In the set $\{f \geq \l\}$ it follows that $\partial_{t}F_{\l}=\partial_{t}f$ and $\nabla_vF_{\l}=\nabla_vf$.  Therefore, multiplying equation \eqref{LFD} with $F_{\l}$ and integrating over $\R^{3}$ it holds that
\begin{multline*}
\frac{1}{2}\dfrac{\d}{\d t}\|F_{\l}(t)\|_{L^2}^{2}=\int_{\R^{3}}\partial_{t}f(t,v)F_{\l}(t,v)\d v
=\int_{\R^{3}}F_{\l}(t,v) {\grad}_v \cdot \left(\grad_{v} f  + \,v f (1- f) \right)\d v\\
=-\int_{\R^{3}}\nabla_{v} F_{\l}(t,v)\cdot \nabla_{v}f(t,v)\d v - \int_{\R^{3}}v \cdot \nabla_{v}F_{\l}(t,v) f(t,v)(1-f(t,v))\d v\\
=-\int_{\R^{3}}|\nabla_{v}F_{\l}(t,v)|^{2}\d v - \int_{\R^{3}}v \cdot \nabla_{v}F_{\l}(t,v) f(t,v)(1-f(t,v))\d v\,.\end{multline*}
Now, on the set $\{f \geq \l\}$ the following identities hold 
$$f(1-f)=F_{\l}(1-F_{\l})+\l(1-\l)-2\l\,F_{\l}\,,$$ 
and
$$f(1-f)\nabla_{v} F_{\l}=\nabla_{v}\left(\tfrac{1}{2}F_{\l}^{2} -\tfrac{1}{3}F_{\l}^{3} \right) +\l(1-\l)\nabla_{v}F_{\l} -\l\nabla_{v}F^{2}_{\l}\,.$$
One concludes that
\begin{multline*}
\frac{1}{2}\dfrac{\d}{\d t}\|F_{\l}(t)\|_{L^2}^{2}+\|\nabla_{v}F_{\l}(t)\|_{L^2}^{2}=-\int_{\R^{3}}v\cdot \nabla_{v}\left(\tfrac{1}{2}F_{\l}^{2}(t,v)-\tfrac{1}{3}F_{\l}^{3}(t,v)\right)\d v \\- \l(1-\l) \int_{\R^{3}}v\cdot \nabla_{v}F_{\l}(t,v)\d v -\l\int_{\R^{3}}v\cdot \nabla_{v}F^{2}_{\l}(t,v)\d v\,\\
=3\int_{\R^{3}}\left(\tfrac{1}{2}F_{\l}^{2}(t,v)-\tfrac{1}{3}F_{\l}^{3}(t,v)\right)\d v +3\l(1-\l) \int_{\R^{3}} F_{\l}(t,v)\d v +3\l\int_{\R^{3}} F^{2}_{\l}(t,v)\d v\,,
 \end{multline*} 
which gives \eqref{eq:13Ric0} neglecting the cubic term.  Then, we deduce \eqref{eq:13Ric} using that $1-2\l \leq 0$ for $\l \geq \frac{1}{2}$.
\end{proof} 
\begin{lem} \label{lem:cflfl}  Let $f(t,\cdot)$ be a weak solution to equation \eqref{LFD}. Then, for any  $0 \leq T_{1}< T_{2} \leq T_{3}$ it holds that
\begin{align}\label{functionalineq}
 \mathscr{E}_{\ell}(T_2,T_3) \leq \frac{1}{T_2 - T_1}\int^{T_2}_{T_1}&\|F_{\ell}(t) \|_{L^2}^{2}\d t \nonumber\\&+6\l(1-\l)\int_{T_{1}}^{T_{3}} \|F_{\ell}(s)\|_{L^1}\d s\,,\qquad \forall\,\ell \in \left[\tfrac{1}{2},1\right]\,.
\end{align} 
\end{lem}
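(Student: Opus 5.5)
The plan is to integrate the differential inequality \eqref{eq:13Ric} of Lemma \ref{lem:fl+} in time and then remove the dependence on the initial time by the standard averaging trick over the starting time in $[T_1,T_2]$. Since $\ell\in[\tfrac12,1]$, the quadratic term $\tfrac32(1-2\ell)\|F_\ell\|_{L^2}^2$ in \eqref{eq:13Ric0} is nonpositive and can be dropped. At the endpoint $\ell=1$ everything vanishes, because $0\le f\le 1$ gives $F_1\equiv 0$. So \eqref{eq:13Ric} holds for all $\ell$ in the closed range.

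First, fix $s\in[T_1,T_2]$ and $t\in[T_2,T_3)$, and integrate \eqref{eq:13Ric} from $s$ to $t$:
\[
\tfrac12\|F_\ell(t)\|_{L^2}^2+\int_s^t\|\nabla_vF_\ell(\tau)\|_{L^2}^2\d\tau\le \tfrac12\|F_\ell(s)\|_{L^2}^2+3\ell(1-\ell)\int_s^t\|F_\ell(\tau)\|_{L^1}\d\tau .
\]
Since $s\le T_2$, the dissipation integral over $[T_2,t]$ is bounded by the one over $[s,t]$. The $L^1$ integral over $[s,t]$ is bounded by the one over $[T_1,T_3]$. This gives, for every $s\in[T_1,T_2]$,
\[
\tfrac12\|F_\ell(t)\|_{L^2}^2+\int_{T_2}^t\|\nabla_vF_\ell\|_{L^2}^2\d\tau\le \tfrac12\|F_\ell(s)\|_{L^2}^2+3\ell(1-\ell)\int_{T_1}^{T_3}\|F_\ell\|_{L^1}\d\tau .
\]

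Next, the left-hand side does not depend on $s$, so I average in $s$ over $[T_1,T_2]$. The first term on the right becomes $\frac{1}{2(T_2-T_1)}\int_{T_1}^{T_2}\|F_\ell(s)\|_{L^2}^2\d s$. I then take the supremum over $t\in[T_2,T_3)$ and multiply by $2$, as in the definition \eqref{eq:EnergY} of $\mathscr{E}_\ell(T_2,T_3)$. This yields exactly \eqref{functionalineq}, with constant $6\ell(1-\ell)$.

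The only delicate point is justifying that \eqref{eq:13Ric} holds in integrated form for a weak solution, that is, that $t\mapsto\|F_\ell(t)\|_{L^2}^2$ is absolutely continuous and the chain rule for $(\cdot)^+$ applies. I would rely on the regularity in Theorem \ref{theo:cauchy}: $f\in X_T$, with $\nabla_v f\in L^p_1$ for $t>0$ and $f$ bounded in $L^1\cap L^\infty$. From this, $F_\ell\in L^2$ with gradient in $L^2_{\rm loc}$ in time, and the standard Stampacchia truncation argument (Steklov averages if needed) gives the integrated identity. If $T_1=0$, the integrability of $\|\nabla_vF_\ell\|_{L^2}^2$ near $t=0$ is not needed, since only the dissipation over $[T_2,t]$ with $T_2>T_1$ appears on the left; alternatively one passes to the limit $s\downarrow 0$.
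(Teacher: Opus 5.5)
Your proposal is correct and is essentially the paper's proof: integrate \eqref{eq:13Ric} from $t_1\in[T_1,T_2]$ to $t_2\in[T_2,T_3]$, enlarge the dissipation and $L^1$ time intervals, take the supremum in $t_2$, and average in $t_1$ over $[T_1,T_2]$. Your added remarks go slightly beyond what the paper writes but do not change the argument: the endpoints $\ell=\tfrac12$ and $\ell=1$ are covered, and you indicate how the truncated energy inequality is justified for weak solutions.
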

\begin{proof}  Fix $0 \leq T_{1} < T_{2} \leq T_{3}$ and $\frac{1}{2} \leq \l \leq 1$ . Integrating inequality \eqref{eq:13Ric} over $(t_{1},t_{2})$ with $T_{1}\leq t_{1} \leq T_{2} \leq t_{2}\leq T_{3}$ we deduce that
\begin{equation*}
 \|F_{\ell}(t_{2})\|_{L^2}^{2}+ 2\int_{T_{2}}^{t_{2}} \left\|\nabla_vF_{\ell}(s)\right\|_{L^2}^{2}\d s \leq  \left\|F_{\ell}(t_{1})\right\|_{L^2}^{2}+6\l(1-\l)\int_{T_{1}}^{t_{2}} \|F_{\ell}(s)\|_{L^1}\d s\,.
 \end{equation*}
Taking the supremum over $t_{2} \in [T_{2},T_{3}]$ it follows that
\begin{equation*}
\mathscr{E}_{\l}(T_{2},T_{3}) \leq  \left\|F_{\ell}(t_{1})\right\|_{L^2}^{2}+6\l(1- \l)\int_{T_{1}}^{T_{3}} \|F_{\ell}(s)\|_{L^1}\d s\, \qquad t_{1} \in [T_{1},T_{2}].
\end{equation*}
Integrating with respect to $t_{1} \in [T_{1},T_{2}]$ we readily deduce \eqref{functionalineq}. 
\end{proof}
Let us control the term involving the $L^{1}$-norm in \eqref{functionalineq} using the improved $L^{1}$-convergence result given in Theorem \ref{theo:L1imp}.  {Since the mass of $f^{\rm in}$ is positive, the associated Fermi-Dirac distribution $\M_{\beta_{0}}$ is such that $\beta_0 >0$} and 
\begin{equation}\label{eq:delta0}
\delta_{0}:=1-\|\M_{\beta_{0}}\|_{L^\infty} >0.\end{equation}
 {Up to reducing $\delta_0$, there is no loss of generality in assuming}
$$\l_{0}:=1-\delta_{0} \geq \frac{1}{2}.$$
\begin{prop}\label{prop:L1Ener} Fix $\eta>0$ and let $f(t,\cdot)$ be a solution to \eqref{LFD} associated with an initial datum $f^{\rm in}$ with mass $\varrho_{\rm in}$. There exists an explicit $T_{\star}(\eta)>0$ {(depending also on $\varrho_{\rm in}$ and $E_{\rm in}$)} such that
\begin{equation}\label{eq:L1Est}
6\l(1- \l)\int_{T_{1}}^{T_{3}} \|F_{\ell}(s)\|_{L^1}\d s \leq \eta\,(T_{3}-T_{1})\end{equation}
holds true for any $T_{\star}(\eta) \leq T_{1} \leq T_{3}$ and $\l \in (\l_{0},1)$. As a consequence, 
\begin{equation}\label{eq:E00} 
\mathscr{E}_{\ell}(T_{2},T_{3}) \leq \eta \left(1+ T_{3}- T_{2} \right)\,
\end{equation} 
is satisfied for any $\l \in (\l_{0},1)$ and  {$T_{\star}(\eta)  \leq T_1 < T_{2} \leq T_{3}$. }
\end{prop}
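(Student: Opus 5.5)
The plan is to control $\|F_{\ell}(s)\|_{L^1}$ pointwise in time by the $L^1$ distance to the equilibrium, and then feed that bound into Lemma \ref{lem:cflfl}.

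First, fix $\ell\in(\ell_0,1)$. Since $\ell>\ell_0=1-\delta_0\geq\|\M_{\beta_0}\|_{L^\infty}$, we have $\M_{\beta_0}(v)\leq \ell$ for all $v$. Hence on $\{f(s)\geq \ell\}$,
\[
F_\ell(s,v)=f(s,v)-\ell\leq \big(f(s,v)-\M_{\beta_0}(v)\big)^+ ,
\]
and this gives
\[
\|F_\ell(s)\|_{L^1}\leq \int_{\R^3}\big(f(s,v)-\M_{\beta_0}(v)\big)^+\d v=\tfrac12\|f(s)-\M_{\beta_0}\|_{L^1},
\]
where the last identity uses equality of masses.

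Next, apply Theorem \ref{theo:L1imp} with accuracy $\eta'=\frac{4}{3}\eta$; any value $\eta'\leq 4\eta/3$ will do. Let $T_\star(\eta)$ be the corresponding explicit time, which depends only on $\eta$, $\varrho_{\rm in}$ and $E_{\rm in}$. For $s\geq T_\star(\eta)$ we then have $\|F_\ell(s)\|_{L^1}\leq \eta'/2$. Since $6\ell(1-\ell)\leq \frac{3}{2}$ for every $\ell$, it follows that
\[
6\ell(1-\ell)\|F_\ell(s)\|_{L^1}\leq \tfrac34\eta'\leq\eta .
\]
Integrating over $[T_1,T_3]\subset[T_\star(\eta),\infty)$ gives \eqref{eq:L1Est}.

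For \eqref{eq:E00}, apply Lemma \ref{lem:cflfl}; this is legitimate because $\ell\in(\ell_0,1)\subset[\frac12,1]$. For the first term on its right-hand side, use $0\leq F_\ell\leq 1-\ell\leq 1$, so that $\|F_\ell(t)\|_{L^2}^2\leq\|F_\ell(t)\|_{L^1}$. Then the time average of $\|F_\ell\|_{L^2}^2$ over $[T_1,T_2]$ is controlled by $\sup_{t\geq T_\star}\|F_\ell(t)\|_{L^1}$, which is bounded by $\eta/2$ (or by $\eta$ after shrinking $\eta'$). The second term is bounded by \eqref{eq:L1Est}, which gives $\eta(T_3-T_1)$.

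The only subtle point is that this produces $\eta(T_3-T_1)$, while \eqref{eq:E00} asks for $\eta(1+T_3-T_2)$. I would fix this by choosing $T_1$ at our convenience. Lemma \ref{lem:cflfl} holds for any $T_1<T_2$, so given $T_\star(\eta)<T_2$ we may pick $T_1\in[\max(T_\star(\eta),T_2-1),T_2)$. Then $T_3-T_1\leq 1+T_3-T_2$, and the first term is at most $\eta$ regardless of the length $T_2-T_1$, since it is an average of quantities bounded by $\eta$.

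To land exactly on $\eta(1+T_3-T_2)$, I would choose $\eta'$ so that both contributions split as $\eta/2$ each. One way is to take $\eta'=\eta/2$ in Theorem \ref{theo:L1imp}. Then the first term is at most $\eta/4$ and the second is at most $\frac34\cdot\frac{\eta}{2}(T_3-T_1)\leq\eta(1+T_3-T_2)$. Apart from this bookkeeping of constants and of the choice of $T_1$, the argument is a direct consequence of Theorem \ref{theo:L1imp} and the inequality $\M_{\beta_0}\leq\ell_0$.
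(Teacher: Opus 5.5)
Your proof is correct and follows the paper's argument: you bound $\|F_\ell\|_{L^1}$ by $\int(f-\M_{\beta_0})^+\,\mathrm{d}v$ using $\M_{\beta_0}\le \ell_0<\ell$, invoke Theorem~\ref{theo:L1imp}, and insert $\|F_\ell\|_{L^2}^2\le(1-\ell)\|F_\ell\|_{L^1}$ into Lemma~\ref{lem:cflfl}. The only difference is at the last step: the paper gets $\eta(1+T_3-T_2)$ by letting $T_1\to T_2$, since the left side does not depend on $T_1$ and the averaged term stays below $\eta$, whereas you fix $T_1\in[T_2-1,T_2)$ and shrink the accuracy parameter to absorb the extra constant, which also works.
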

\begin{proof} Since $\M_{\beta_{0}} \leq 1-\delta_{0}$, for any $\l \geq \l_{0}$ it holds that $\{f \geq \l\} \subset \{f \geq \M_{\beta_{0}}\}$. Therefore,
$$\|F_{\l}(t)\|_{L^1}=\int_{\R^{3}}(f(t,v)-\l)^{+}\d v \leq \int_{\R^{3}}\left(f(t,v)-\M_{\beta_{0}}\right)^{+}\d v.$$
According to Theorem \ref{theo:L1imp}, for any $\eta >0$ there exists $T_{\star}(\eta) >0$ such that
$$6(1-\l)\|F_{\l}(t)\|_{L^1} \leq \eta \qquad \forall\, t \geq T_{\star}(\eta)\,,$$
which gives \eqref{eq:L1Est}, recalling that $\l \leq 1$.  With regard to \eqref{eq:E00}, observe that, since $f(t) \leq 1$,
\begin{equation}\label{eq:fInd}
0 \leq F_{\ell}(t,v) \leq  (1-\l)\ind_{\{f(t) \geq \l\}}\end{equation} and, therefore,
$$\|F_{\ell}(t)\|_{L^2}^{2}=\int_{\{f(t) \geq \l\}}(F_{\ell}(t,v))^{2}\d v \leq  (1-\l) \int_{\R^{3}}F_{\l}(t,v)\d v \leq  \eta$$
from the previous point provided that $t \geq T_{\star}(\eta)$. As a consequence, 
\begin{equation}\label{eq:avera} 
\frac{1}{T_{2}-T_{1}}\int_{T_{1}}^{T_{2}}\|F_{\ell}(s)\|_{L^2}^{2}\d s \leq  \eta\,, \end{equation}
for $T_{1} >T_{\star}(\eta)$.  Thus, using  \eqref{functionalineq} it follows that
$$\mathscr{E}_{\ell}(T_2,T_3) \leq \eta\left(1+T_{3}-T_{1}\right) \qquad \forall\, T_{\star} \leq T_{1} \leq T_{2} \leq T_{3}$$
and one gets \eqref{eq:E00} by letting $T_{1}\to T_{2}$.
\end{proof}
The fundamental result for the implementation of the level set iteration is given by the following proposition. 
\begin{prop}\label{main-energy-functional}  Let  $f(t,\cdot)$ be a  solution to \eqref{LFD}. Then, there exists an universal constant $C>0$  such that 
\begin{equation}\label{eq:ElT2T3q}
\mathscr{E}_{\l}(T_{2},T_{3}) \leq C(\l-k)^{-\frac{4}{3}}
\left(\frac{1}{T_{2}-T_{1}}+\frac{1-\l}{\l-k}\right)\mathscr{E}_{k}(T_1,T_3)^{\frac{5}{3}}\end{equation}
for any times $0 \leq T_{1} < T_{2} \leq T_{3}$ and levels {$0 \leq k < \l \in \left[\frac{1}{2},1\right]\,$}. 
\end{prop}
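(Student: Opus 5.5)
We argue as in the classical De Giorgi energy iteration. The starting point is the energy inequality of Lemma \ref{lem:fl+} in its unsimplified form \eqref{eq:13Ric0}: since $\l\geq\frac12$, the term $\frac32(1-2\l)\|F_\l\|_{L^2}^2$ is nonpositive and is dropped. Integrating in time as in the proof of Lemma \ref{lem:cflfl} gives, for $T_1\le t_1\le T_2$,
\begin{equation*}
\mathscr{E}_\l(T_2,T_3)\le \frac{1}{T_2-T_1}\int_{T_1}^{T_3}\|F_\l(s)\|_{L^2}^2\,\d s+6\l(1-\l)\int_{T_1}^{T_3}\|F_\l(s)\|_{L^1}\,\d s .
\end{equation*}
Both integrals on the right are then estimated by powers of $\mathscr{E}_k(T_1,T_3)$, using the level gap $\l-k$.

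The key pointwise facts are the following. Since $k<\l$, we have $\{F_\l>0\}\subset\{F_k\geq \l-k\}$ and $F_\l\le F_k$. Hence, for any $q\geq 1$,
\begin{equation*}
\|F_\l(s)\|_{L^1}\le \int F_k\,\ind_{\{F_k\geq \l-k\}}\,\d v\le (\l-k)^{1-q}\int F_k^{q}\,\d v,
\qquad
\|F_\l(s)\|_{L^2}^2\le (\l-k)^{2-q}\int F_k^{q}\,\d v .
\end{equation*}
We take $q=\frac{10}{3}$, the parabolic Sobolev exponent in dimension $3$. The Gagliardo--Nirenberg inequality $\|u\|_{L^{10/3}}^{10/3}\le C\|u\|_{L^2}^{4/3}\|\nabla u\|_{L^2}^2$ gives, after integration in time,
\begin{equation*}
\int_{T_1}^{T_3}\|F_k(s)\|_{L^{10/3}}^{10/3}\,\d s\le C\sup_{s}\|F_k(s)\|_{L^2}^{4/3}\int_{T_1}^{T_3}\|\nabla F_k(s)\|_{L^2}^2\,\d s\le C\,\mathscr{E}_k(T_1,T_3)^{\frac53}.
\end{equation*}
This is the standard interpolation of $L^\infty_tL^2_v\cap L^2_t\dot H^1_v$ into $L^{10/3}_{t,v}$.

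Plugging these into the two terms gives the following bounds.
\begin{itemize}
\item The $L^2$ term is bounded by $C(\l-k)^{-4/3}(T_2-T_1)^{-1}\mathscr{E}_k^{5/3}$, since $2-\frac{10}{3}=-\frac43$.
\item The $L^1$ term is bounded by $6\l(1-\l)(\l-k)^{-7/3}C\mathscr{E}_k^{5/3}\le C(1-\l)(\l-k)^{-1}(\l-k)^{-4/3}\mathscr{E}_k^{5/3}$, using $\l\le 1$ and $1-\frac{10}{3}=-\frac73$.
\end{itemize}
Summing the two gives exactly \eqref{eq:ElT2T3q}.

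The step needing most care is the rigorous justification of the energy identity for $F_\l$ for the weak solutions of Theorem \ref{theo:cauchy}. This includes the integration by parts of the terms $v\cdot\nabla_v(\cdot)$, where the moment control in $X_T$ and $F_\l\le 1$ make the boundary terms vanish. The remaining steps are routine exponent bookkeeping, the key point being that the exponent $\frac53>1$ is what later drives the iteration.
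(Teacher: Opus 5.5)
Your proof is correct and is essentially the paper's argument. Both start from Lemma \ref{lem:cflfl} and control the two terms through $\|F_\l\|_{L^2}^2\le c_0(\l-k)^{-4/3}\|F_k\|_{L^2}^{4/3}\|\nabla_vF_k\|_{L^2}^2$; the paper cites this bound from the literature, while you derive it via Chebyshev plus Gagliardo--Nirenberg. The only cosmetic difference is that for the $L^1$ term the paper passes through an intermediate level $k_0=\frac{k+\l}{2}$, whereas you apply Chebyshev with $q=\frac{10}{3}$ directly.

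One small slip: your $L^2$ Chebyshev bound $\|F_\l\|_{L^2}^2\le(\l-k)^{2-q}\int F_k^q\,\d v$ requires $q\geq 2$, not $q\geq 1$. This is harmless, since you only use $q=\frac{10}{3}$.
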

\begin{proof}
Fix $0 \leq T_{1} < T_{2} \leq T_{3}$ and estimate the terms in the right side of \eqref{functionalineq} as follows:  the term $\|F_{\ell}\|_{L^1}$ is controlled by $\|F_{\ell}\|_{L^2}$ using the key observation that if $0 \leq k <\ell$ it follows that 
$$0 \leq F_{\ell}\leq F_{k}  \qquad\text{and}\qquad \mathbf{1}_{\{f \geq \l\}} \leq \frac{F_{k}}{\l - k}\,,$$ so that, for any $\alpha \geq0$ it holds that
\begin{equation*}\label{eq:alphaf}
F_{\ell}(t) \leq F_\l(t) \left( \frac{F_{k}(t)}{\l-k}\right)^{\alpha} \leq \left(\l-k\right)^{-\alpha}\,\left(F_{k}(t)\right)^{1+\alpha} \qquad 0 \leq k < \l.
\end{equation*}
Applying this with $0 \leq k_{0} < \l$, where $k_{0}$ will be chosen below, we obtain
$$\|F_{\ell}(t)\|_{L^1} \leq  \left(\l-k_{0}\right)^{-1}\|F_{k_{0}}(t)\|_{L^2}^{2}$$
so that
\begin{equation*}\label{eq:mulcflfl}
6\l(1-\l)\int_{T_{1}}^{T_{3}} \|F_{\ell}(s)\|_{L^1}\d s \leq 6\frac{1-\l}{\l-k_{0}}\int_{T_{1}}^{T_{3}} \|F_{k_{0}}(s)\|_{L^2}^{2}\d s, \qquad k_{0} < \l\,.
\end{equation*}
Use now for this term and for the $L^{2}$-term in the right side of \eqref{functionalineq} the interpolation estimate derived in \cite[Lemma 4.2, Eq. (4.5)]{Alonso2}
\begin{equation}\label{eq:flLq}
\|F_{\ell}(t)\|_{L^2}^{2} \leq \frac{c_{0}}{(\l-k)^{\frac{4}{3}}}\, \|F_{k}(t)\|_{L^2}^{\frac{4}{3}}\,\left\|\nabla_{v} \,F_{k}(t)\right\|_{L^2}^{2}, \qquad 0 \leq k < \l\,,
\end{equation}
for a universal constant $c_{0} >0$.  We conclude that
$$ 
\frac{1}{T_2 - T_1}\int^{T_2}_{T_1}\| F_{\ell}(s)\|_{L^2}^{2}\,\d s   
\leq \frac{c_{0}}{(T_2 - T_1)(\ell - k)^{\frac{4}{3}}}
\int_{T_{1}}^{T_{2}} \|F_{k}(s)\|_{L^2}^{\frac{4}{3}}\left\|\nabla_vF_{k}(s)\right\|_{L^2}^{2}\d s
$$
and, recalling the definition of the energy functional, this results in 
\begin{equation*}\label{eq:fl2nu}
\frac{1}{T_2 - T_1}\int^{T_2}_{T_1}\| F_{\ell}(s)\|_{L^2}^{2}\,\d s 
\leq \frac{c_{1}}{(T_2 - T_1)(\ell - k)^{\frac{4}{3}}}\,\mathscr{E}_{k}(T_{1},T_{2})^{\frac{5}{3}}
\end{equation*}
for some universal $c_{1} >0$.  With regard to the second term in \eqref{functionalineq} we resort to estimate \eqref{eq:mulcflfl} with the choice $k_{0}=\frac{k+\l}{2}$ so that $\l-k_{0}=k_0 - k = \frac{\l-k}{2}$ to deduce, thanks to \eqref{eq:flLq}, that   
\begin{equation*}\begin{split} 
6\l(1-\l)\int_{T_{1}}^{T_{3}} &\|F_{\ell}(s)\|_{L^1}\d s \leq 6\frac{1-\l}{\l-k_{0}}\int_{T_{1}}^{T_{3}} \|F_{k_{0}}(s)\|_{L^2}^{2}\d s\\
&\leq \frac{6c_{0}(1-\l)}{\left(\frac{\l-k}{2}\right)^{1+\frac{4}{3}}}\int_{T_{1}}^{T_{3}}  \|F_{k}(s)\|_{L^2}^{ \frac{4}{3}}\left\|\nabla_vF_{k}(s)\right\|_{L^2}^{2}\d s\leq c_{2}\frac{(1-\l)}{(\l-k)^{\frac{7}{3}}}\mathscr{E}_{k}(T_{1},T_{3})^{ \frac{5}{3}}
\end{split}\end{equation*}
for some universal constant $c_{2} >0$. This concludes the proof.
\end{proof} 
\subsection{Proof of Theorem \ref{theo:satgap} -- Appearance of a saturation gap}  The proof follows three main steps.

\subsubsection{First step: Sequence $(\E_{n})_{n}$ of energies} We recall the definition of the initial gap and level,
$$\delta_{0}=1-\|\M_{\beta_{0}}\|_{L^\infty} >0 \qquad\text{and}\qquad \l_{0}=1-\delta_{0}.$$
Fix
$0<\Delta \leq T$ and define the following sequences of gaps, levels, and times
$$
\delta_n:=\frac{\delta_{0}}{2}\left(1+\frac{1}{2^{n}}\right),\qquad  k_n := 1-\delta_{n} , \quad \text{ and } \quad  t_{n}:=T-\frac{\Delta}{2}\left(1+\frac{1}{2^{n}}\right), \qquad n \in \N.
$$
Define the sequence of energies
\begin{equation}\label{defEFlevels}
\mathscr{E}_{n}:=\mathscr{E}_{k_n}(t_n,T)\,,\qquad n \in \N \,.
\end{equation}
Apply Proposition \ref{main-energy-functional} with the choices 
$$k=k_{n},\qquad \l=k_{n+1},  \qquad T_{1}=t_{n},\qquad T_{2}=t_{n+1}, \qquad T_{3}=T$$ so that
$$\l-k=\frac{\delta}{2^{n+2}} \qquad\text{and}\qquad T_{2}-T_{1}=\frac{\Delta}{2^{n+2}},$$
while 
$$\l_{0}\leq \l \leq 1 \qquad\text{and}\qquad 1-\l=1-k_{n+1}=\delta_{n+1} \leq \delta_{0}\,.$$
With these choices it follows from \eqref{eq:ElT2T3q} that 
\begin{align}\label{ControlEFlevels}
\mathscr{E}_{n+1} \leq C\left(\frac{\delta_{0}}{2^{n+2}}\right)^{-\frac{4}{3}} &\left(\frac{2^{n+2}}{\Delta} + 2^{n+2}\right)\,\mathscr{E}_{n}^{ \frac{5}{3}} \nonumber\\
&\leq C \delta_{0}^{-\frac{4}{3}} 2^{\frac{7}{3}(n+2)}\left(1+\frac{1}{\Delta} \right)\mathscr{E}_{n}^{\frac{5}{3}}, \qquad n \in\N .
\end{align}
\subsubsection{{Second step:} Upper barrier sequence} Introduce the sequence
$$\mathscr{U}_{n}:=\E_{0}\,Q^{-n},  \qquad n \in \N,$$
with $Q >1$ to be determined in such a way that, for  suitable choice of $\Delta$, the sequence $\left(\mathscr{U}_{n}\right)_{n}$ satisfies \eqref{ControlEFlevels} with the reverse inequality, that is,
$$\mathscr{U}_{n+1}  \geq C \delta_{0}^{-\frac{4}{3}} 2^{\frac{7}{3}(n+2)}\left(1+\frac{1}{\Delta} \right)\mathscr{U}_{n}^{ \frac{5}{3}}.$$
Such an inequality is equivalent to 
\begin{equation}\label{E0-constrain}
1 \geq C \delta_{0}^{-\frac{4}{3}} 2^{\frac{7}{3}(n+2)}Q^{1-\frac{2}{3}n}\E_{0}^{\frac{2}{3}}\left(1+\frac{1}{\Delta} \right)\,.\end{equation}
Choosing 
$Q \geq 2^{\frac{7}{2}}$ and setting $C_{0}=C4^{\frac{7}{3}}$, one sees that \eqref{E0-constrain} will be true provided that
\begin{equation}\label{eq:constraintE0}
1 \geq C_{0}Q \delta_{0}^{-\frac{4}{3}} \E_{0}^{\frac{2}{3}}\left(1+\frac{1}{\Delta} \right)\,,\end{equation} 
which is seen as a condition on  $\Delta$ and $\E_{0} : =\E_{k_{0}}\left(t_{0},T\right).$ At this stage, we recall that $k_{0}=1-\delta_{0}=\l_{0}$ and we resort to our main result regarding the long-time behaviour of solutions to \eqref{LFD}. Namely, one observes from Proposition \ref{prop:L1Ener} that for any $\eta >0$ it follows that
$$\E_{0}=\E_{k_{0}}\left(t_{0},T\right) \leq \eta\left(1+T-t_{0}\right)$$
as soon as $t_{0} \geq T_{\star}(\eta)$, where $T_{\star}(\eta)$ is the explicit time given in Proposition \ref{prop:L1Ener}. Since $t_{0}=T-\Delta$ for $T > T_{\star}(\eta)+1$ and $\Delta \leq 1$, then
$$\E_{0} \leq \eta(1+\Delta)\,.$$
Consequently, condition \eqref{eq:constraintE0} is met provided that
$$
1 \geq C_{0}Q \delta_{0}^{-\frac{4}{3}} \bigg(1+ \Delta \bigg)^{\frac{2}{3}}\left(1+\frac{1}{\Delta} \right)\eta^{\frac{2}{3}}.
$$
Therefore, choosing $\Delta=1$ and, then,
$$\eta \leq (C_{0}Q)^{-\frac{3}{2}}\delta_{0}^{2} 2^{-\frac{5}{2}}$$
lead to condition \eqref{eq:constraintE0}. 
\subsubsection{{Final step:} A comparison argument}  The sequences $\left(\E_{n}\right)_{n}$ and $\left(\mathscr{U}_{n}\right)_{n}$ satisfy \eqref{ControlEFlevels} with a reverse inequality while coinciding for $n=0$, therefore,  
$$0 \leq \E_{n} \leq \mathscr{U}_{n} \qquad \forall\, n \in \N\,.$$
Since $Q >1$, it holds that $0\leq \lim_{n}\E_{n}\leq \lim_{n}\mathscr{U}_{n} = 0$.   Furthermore, since the sequence of levels $(k_{n})_{n}$  and the sequence of times $(t_n)_{n}$ are such that
$$\lim_{n}k_{n}=\l_{\infty}:= \left(1-\frac{\delta_{0}}{2}\right), \qquad \lim_{n}t_{n}=T-\frac{\Delta}{2}=T-\frac{1}{2},$$
this implies that
$$
\sup_{\tau\in[T-\frac{1}{2},T]}\| F_{\l_{\infty}}(\tau)\|_{L^2}^{2}=0\,,\qquad T \geq T_\star(\eta) +1,
$$
that is
$$
\| f(\tau) \|_{L^\infty}\leq  \left(1-\frac{\delta_{0}}{2}\right) \quad \text{for any}\quad \tau\in\left[T-\frac{1}{2},T\right]\,,\quad T\geq T_\star(\eta)+1\,.
$$
Because $T\geq T_\star(\eta)+1$ is arbitrary, this leads to
$$
\| f(t) \|_{L^\infty}\leq  \left(1-\frac{\delta_{0}}{2}\right) \quad \text{for any}\quad t\geq T_\star(\eta)+1>0\,.
$$
This concludes the proof of Theorem \ref{theo:satgap}.

\end{document}